\documentclass[12pt,reqno]{amsart}
\usepackage{amsmath,amssymb,amsfonts,amsthm}
\usepackage[right=2.5cm, left=2.5cm, top=2.5cm, bottom=2.5cm]{geometry}
\usepackage{tikz}
\usetikzlibrary{positioning, arrows.meta, shapes.geometric}
\usetikzlibrary{matrix}
\usepackage{graphicx}
\usepackage{pgfplots}
\usepackage{subcaption} 
\usepgfplotslibrary{fillbetween}
\usetikzlibrary{patterns}
\DeclareMathAlphabet{\mathpzc}{OT1}{pzc}{m}{it}

\newtheorem{theorem}{Theorem}[section]
\newtheorem{lema}[theorem]{Lemma}

\newtheorem{co}[theorem]{Corollary}
\theoremstyle{definition}
\newtheorem{de}[theorem]{Definition}

\theoremstyle{remark}
\newtheorem{remark}[theorem]{Remark}

\numberwithin{equation}{section}

\begin{document}
	
\title[Critical thresholds for super-diffusive memory equations]{Critical thresholds and instantaneous norm inflation for super-diffusive integro-differential equations}

\author[B. de Andrade]{Bruno de Andrade}
\address[B. de Andrade]{Departamento de Matem\'atica\\
	Universidade Federal de Sergipe\\
	S\~ao Crist\'ov\~ao - SE\\
	Brazil.}
\email{bruno@mat.ufs.br}

\begin{abstract}
	This manuscript investigates the Cauchy problem for a class of nonlinear integro-differential equations governing anomalous super-diffusive transport in $\mathbb{R}^N$. The linear dynamics are driven by a dual-scale memory kernel whose Laplace transform is sectorial and exhibits distinct power-law asymptotics at high and low frequencies. This super-diffusive structure precludes the infinite regularizing capacity characteristic of classical parabolic theory; consequently, the associated resolvent operator possesses a heavy algebraic tail in Fourier space, acting as a pseudo-differential operator in the H\"ormander class $S^{-2}_{1,0}$ and restricting spatial smoothing. By establishing rigorous $L^q-L^p$ multiplier estimates, the critical Lebesgue threshold $q_c$ for local well-posedness is determined. To demonstrate the sharpness of this threshold, instantaneous norm inflation---and consequent ill-posedness---is proven in the supercritical regime $1 < q < q_c$. Furthermore, tracking the structural crossover to the long-time relaxation parameter resolves the global asymptotic dynamics. The nonlocal Fujita-type critical exponent $\rho_F$ is identified, and global-in-time existence along with algebraic decay is established for small initial data in intersection spaces, provided the nonlinearity remains supercritical and overcomes the structural algebraic barrier connecting the dual scales. This general framework applies directly to canonical physical models, including Cole-Cole fractional retardation and multi-scale Prabhakar memory.
\end{abstract}

\subjclass[2020]{Primary 35R09, 35R11, 45K05; Secondary 35B53, 35B33, 35A01}
\keywords{Integro-differential equations, anomalous diffusion, resolvent operators, H\"ormander multipliers, critical Lebesgue spaces, instantaneous norm inflation, Fujita-type critical exponent.}

\maketitle

\section{Introduction}
Partial differential equations driven by non-local memory operators arise in diverse mathematical and applied contexts. The study of anomalous diffusion, in particular, encompasses transport processes where the mean squared displacement scales non-linearly in time as $t^\gamma$ with $\gamma \neq 1$, a phenomenon that naturally emerges in complex heterogeneous media, ranging from turbulent plasma and subsurface tracer transport to cellular biology and non-Newtonian viscoelastic fluids \cite{Metzler2000}. At the mesoscopic level, these dynamics are frequently modeled via Continuous Time Random Walks (CTRW), where the decoupling of heavy-tailed waiting times and jump length distributions generates non-Markovian macroscopic limits \cite{Meerschaert2012}. When the temporal waiting time distribution exhibits power-law decay, the classical instantaneous derivative is replaced by a non-local memory operator, yielding integro-differential equations that interpolate between standard heat and wave dynamics.

This manuscript investigates the Cauchy problem for a class of nonlinear integro-differential equations with memory, governing super-diffusive anomalous transport in $N$-dimensional space $\mathbb{R}^N$ ($N \ge 1$), defined by
\begin{equation}\label{eq:main}
	\begin{cases}
		\partial_t u(x,t) = \displaystyle \int_0^t g(t-s)\Delta u(x,s)ds + f(u(x,t)), & \text{in } \mathbb{R}^N \times (0,\infty), \\
		u(x,0) = u_0(x), & \text{in } \mathbb{R}^N.
	\end{cases}
\end{equation}
For initial data $u_0 \in L^q(\mathbb{R}^N)$ with $1 \le q < \infty$, the nonlinear source is assumed to exhibit power-type growth $f(u) = u|u|^{\rho-1}$ with $\rho > 1$. The memory kernel $g \in L^1_{\text{loc}}(\mathbb{R}^+)$ dictates the underlying linear dynamics. Rather than restricting the analysis to a single scale-invariant fractional operator, the present framework addresses a broad class of admissible kernels  (formalized in Section 2) whose Laplace transforms are sectorial and characterized by distinct power-law asymptotics at high and low frequencies, governed by exponents $\alpha_\infty \in (0,1)$ and $\alpha_0 \in [0,1)$, respectively. Since the memory kernel acts as a temporal convolution directly over the Laplacian, the effective differential order of the system at high frequencies shifts to $1+\alpha_\infty$, implying that the short-time mean squared displacement scales proportionally to $t^{1+\alpha_\infty}$. This scaling exponent strictly exceeds one, ensuring the model captures a super-diffusive regime.

\subsection{Physical motivation and the multi-scale memory kernel}
The adoption of distinct temporal limits for the kernel $g$ arises from the concept of structural crossovers in complex media. The standard time-fractional equation corresponds to the Riemann-Liouville kernel $g(t) = \frac{t^{\alpha-1}}{\Gamma(\alpha)}$. As established by Baeumer and Meerschaert \cite{Baeumer2001}, such time-fractional Cauchy problems emerge when long-range temporal correlations lead to the subordination of classical uncorrelated motion. This scale invariance collapses the high and low-frequency regimes into a single anomalous parameter $\alpha_\infty = \alpha_0 = \alpha$. However, heterogeneous media often feature transport mechanisms that transition between distinct anomalous regimes, or relax back to classical diffusion over extended time scales, behavior which cannot be captured by a single fractional index.

The generalized framework developed herein captures these complex dynamics. For instance, in viscoelastic materials exhibiting distinct retardation times, the memory kernel is typically modeled as a linear combination of fractional derivatives (multi-term fractional diffusion). In this scenario, short-time particle interactions are dominated by the most singular exponent $\beta$, whereas long-time transport is governed by the slower relaxation exponent $\alpha$, thereby establishing a dual-scale regime where $\alpha_\infty = \beta < \alpha = \alpha_0$. 

Another canonical application involves fractional retardation processes, which arise when anomalous transport undergoes macroscopically governed transitions toward standard Gaussian profiles over long horizons. This behavior is typified by the Cole-Cole relaxation model. Originally proposed by Cole and Cole \cite{ColeCole1941} to describe broad dispersion and absorption in dielectrics---where the complex dielectric locus traces a depressed circular arc in the complex plane---this empirical formulation has been extensively adapted to anomalous transport \cite{Mainardi2010}. The corresponding memory transform takes the algebraic form $\hat{g}(\lambda) = (\lambda^\alpha + \gamma)^{-1}$ for $\gamma > 0$. At short times ($|\lambda| \to \infty$), the system exhibits highly singular super-diffusive behavior characterized by $\alpha_\infty = \alpha \in (0,1)$. Conversely, at long times ($|\lambda| \to 0$), the memory dynamics undergo a structural crossover to a classical random walk limit, yielding $\alpha_0 = 0$. Modeling this spectral dichotomy without introducing unmanageable singular poles on the principal branch cut fundamentally distinguishes this mathematical architecture from standard fractional parabolic analysis.

\subsection{State of the art and mathematical challenges}
The interplay among diffusion, historical memory, and nonlinear amplification remains a central focus in nonlinear analysis. For the classical heat equation (where $g$ acts as the Dirac delta distribution), the seminal work of Fujita \cite{Fujita1966} established the critical exponent $\rho_F = 1 + 2/N$, separating global existence from finite-time blow-up. Weissler \cite{Weissler1981} subsequently developed the optimal Lebesgue space well-posedness theory for the local parabolic problem, relying on the infinite regularizing capacity of the Gaussian heat kernel. Building on this functional framework, Giga \cite{Giga1986} established abstract $L^p$ estimates and semigroup methods to prove the existence and regularity of solutions for semilinear parabolic equations. Similarly, for the semilinear wave equation, the search for critical exponents and finite-time blow-up of solutions was notably advanced by Kato \cite{Kato80}, who identified the structural barriers separating global solutions from singularity formation in nonlinear hyperbolic equations. 

When transitioning to systems with historical memory, the early foundational works of Gurtin and Pipkin \cite{Gurtin1968}, Nunziato \cite{Nunziato71}, and Miller \cite{Miller1978} on integrodifferential equations for rigid heat conductors set the stage for studying generalized linear transport dynamics. Subsequent models of fractional diffusion and wave equations were rigorously formalized by Schneider and Wyss \cite{Schneider1989}, expressing fundamental solutions in terms of Fox functions. Concurrently, the study of anomalous transport driven by spatial non-locality has gained prominence. Guedda and Kirane \cite{Guedda2001} investigated equations with fractional spatial operators, extending the Fujita phenomenon to the fractional Laplacian framework. In the context of scaling-invariant spaces, Ferreira and Villamizar-Roa \cite{FerreiraVillamizar2006} proved the uniqueness and long-time asymptotic stability of self-similar solutions. Cazenave et al. \cite{Cazenave2006} further expanded the theory to parabolic equations where a classical time derivative is coupled with a time-fractional integral acting exclusively on the nonlinear reaction (see also Fino and Kirane \cite{Fino2012}). Expanding the scope to fully nonlocal diffusion equations, Kemppainen, Siljander, and Zacher \cite{Kemppainen2017} advanced the decay theory for mild and weak solutions. Utilizing Fourier analysis and energy methods, they established quantitative $L^p$ and optimal $L^2$ decay rates, revealing a critical dimension phenomenon where the decay ceases to improve in high dimensions (see also Pr\"uss \cite{Pruss1993} for foundational resolvent operator theories, and Vergara and Zacher \cite{VergaraZacher2015} for optimal decay estimates via energy methods). Focusing on purely time-fractional diffusion featuring instantaneous nonlinear reactions, de Andrade, Siracusa, and Viana \cite{deAndradeSiracusaViana} recently established local well-posedness and a blow-up alternative, demonstrating that the Fujita-type critical exponent depends on the derivation fractional order.

In the related context of higher-order non-local dynamics, the literature has evolved from bounded geometries to whole-space formulations. For the fractional diffusion-wave equation where the analytic order satisfies $\alpha \in (1,2)$, establishing mild solvability, smoothing estimates, and explicit finite-time blow-up thresholds in bounded domains $\Omega \subset \mathbb{R}^N$ was recently achieved in critical and subcritical Lebesgue spaces \cite{CCdeA26, deA26, deAndradeSantos2026}. Parallel to these boundary-value developments, the analysis in the entire space $\mathbb{R}^N$ was advanced by D'Abbicco, Ebert, and Picon \cite{DabbiccoEbertPicon2019}. In their framework, the time-fractional Caputo derivative acts comprehensively over the system, meaning that the non-local memory operator encompasses both the diffusive and the reactive terms. Whether through the geometric confinement of bounded domains or the comprehensive history convolution on the non-linear source, these classical settings inherently benefit from explicit Mittag-Leffler representations or additional fractional integration smoothing effects, which naturally damp high-frequency oscillatory singularities.

However, the super-diffusive regime ($\alpha_\infty > 0$) induced by the dual-scale kernel $g$ in the present model fundamentally alters this analytical landscape. By coupling a strictly instantaneous local-in-time reaction with a generalized anomalous dissipation in the entire space $\mathbb{R}^N$, the mild flow lacks any non-local history damping on the source, thereby preventing the direct application of explicit Mittag-Leffler functions. Instead, the transfer function $\widehat{S}(t,\xi)$ exhibits a heavy algebraic tail at high frequencies; consequently, the fundamental solution lacks global integrability, precluding infinite regularizing capacity. The underlying resolvent operator acts intrinsically as a pseudo-differential operator belonging to the H\"ormander class $S^{-2}_{1,0}$, restricting spatial smoothing to a gain of exactly two derivatives and enforcing a strict topological barrier that requires the dimensional constraint $\frac{1}{q} - \frac{1}{p} < \frac{2}{N}$.

Overcoming this barrier to close the fixed-point contraction mapping constitutes the core analytical challenge of this manuscript, particularly within a dual-scale kernel framework where the singular behavior shifts structurally between the short-time exponent $\alpha_\infty$ and the long-time relaxation parameter $\alpha_0$. Resolving this scale mismatch requires the development of  Fourier multiplier estimates capable of tracking these cross-frequency transitions without degeneration. Furthermore, to demonstrate the geometric sharpness of this critical threshold, a non-local norm inflation result based on asymptotic frequency modulations is implemented, proving that the supercritical data-to-solution map undergoes an immediate collapse near the origin. This unified approach provides a rigorous foundation for both local flow stability and the structural ill-posedness of super-diffusive transport media.

\subsection{Main contributions}
This manuscript aims to establish the optimal local and global well-posedness theory for \eqref{eq:main}, alongside a rigorous proof of ill-posedness in the supercritical Lebesgue regime. Deploying a combination of complex contour deformation, Fourier multiplier theory, and fractional Sobolev embeddings yields  $L^q-L^p$ bounds for the resolvent operator family $(S(t))_{t\ge0}$.

The high-frequency and low-frequency smoothing indices are defined, respectively, by
\begin{equation*}
	\gamma_{q,p} = \frac{N(1+\alpha_\infty)}{2}\left(\frac{1}{q} - \frac{1}{p}\right), \quad \text{and} \quad \beta_{m,p} = \frac{N(1+\alpha_0)}{2}\left(\frac{1}{m} - \frac{1}{p}\right).
\end{equation*}
As demonstrated in Section 4, the short-time dynamics are governed strictly by the high-frequency exponent $\alpha_\infty$. The critical Lebesgue threshold for local well-posedness is therefore identified as
\begin{equation*}
	q_c := \frac{N(\rho-1)(1+\alpha_\infty)}{2}.
\end{equation*}
Theorem \ref{thm:local_existence} secures local Hadamard well-posedness in $L^q(\mathbb{R}^N)$ for $q \ge q_c$, utilizing carefully calibrated time-weighted Banach spaces to absorb the severe temporal singularity $\gamma_{q,p}$ near the origin. 

Confirming the sharpness of this threshold, Theorem \ref{thm:ill_posedness} establishes the supercritical ill-posedness of the problem. For any $1 < q < q_c$, localized initial data are constructed to display instantaneous norm inflation. Adapting the asymptotic techniques of Christ, Colliander, and Tao \cite{Christ2003} to the non-local operator behavior proves that the data-to-solution map is discontinuous at the origin in the $L^q$ topology.

Section 5 shifts the focus to the global asymptotic dynamics. Theorem \ref{thm:global_existence} resolves the dual-scale complexity of the kernel. For solutions to exist globally and avoid finite-time blow-up, the nonlinear source must balance against the long-time relaxation parameter $\alpha_0$. The nonlocal critical mass emerges as $m_c = \frac{N(\rho-1)(1+\alpha_0)}{2}$, generating the corresponding Fujita-type critical exponent $\rho_F = 1 + \frac{2}{N(1+\alpha_0)}$. The analysis demonstrates that if the nonlinear reaction overcomes the novel spectral bridge constraint 
\[\rho > \rho_0 := \max\left( \rho_F, \frac{1+\alpha_\infty}{1+\alpha_0} \right),\] 
a unique global solution exists provided the initial data is sufficiently small in the intersection space $L^{q_c}(\mathbb{R}^N) \cap L^m(\mathbb{R}^N)$, where $m \in \left(\frac{q_c}{\rho}, m_c\right)$. Finally, Corollary \ref{cor:asymptotic_decay} confirms that despite the non-local non-linear memory, the global solution inherits the unperturbed linear asymptotic decay profile $\mathcal{O}(t^{-\beta_{m,p}})$.

\noindent \textbf{A concrete application.} To clarify the physical readability and immediate applicability of these abstract thresholds, consider the anomalous propagation of a structural disturbance in a three-dimensional heterogeneous viscoelastic medium governed by a canonical Cole-Cole memory kernel $g(t) = t^{-2/3} E_{1/3, 1/3}(-2 t^{1/3})$ and a cubic nonlinear amplification source:
\begin{equation*}
	\begin{cases}
		\partial_t u(x,t) = \displaystyle \int_0^t g(t-s) \Delta u(x,s)ds + u(x,t)^3, & \text{in } \mathbb{R}^3 \times (0,\infty), \\
		u(x,0) = u_0(x), & \text{in } \mathbb{R}^3.
	\end{cases}
\end{equation*}
Here, the high-frequency anomalous exponent is $\alpha_\infty = 1/3$, while the long-time relaxation parameter corresponds to the classical diffusion crossover $\alpha_0 = 0$. Since the nonlinearity is cubic, the critical spatial regularity index verifies 
\begin{equation*}
	q_c = \frac{3(3-1)(1+1/3)}{2} = 4,
\end{equation*}
and, consequently, $L^{q_c}(\mathbb{R}^3)$ simplifies to the standard space $L^{4}(\mathbb{R}^3)$. The non-local Fujita-type critical exponent for this geometry reduces to $\rho_F = 1 + \frac{2}{3(1+0)} = \frac{5}{3}$. Since the physical reaction power satisfies $\rho = 3 > \max\left(\frac{5}{3}, \frac{4}{3}\right) = \frac{5}{3}$, the established framework guarantees that any small initial disturbance belonging to the intersection of classical energy spaces, namely $u_0 \in L^4(\mathbb{R}^3) \cap L^2(\mathbb{R}^3)$, yields a unique global-in-time mild solution that decays algebraically toward equilibrium, avoiding the finite-time singularity formation inherent to supercritical configurations.

\subsection{Final remarks and generalizations}\label{subsec:final_remarks}
In conclusion, the algebraic power-type nonlinearity $f(u) = u|u|^{\rho-1}$ selected in \eqref{eq:main} serves as the canonical framework required to isolate and establish the exact, sharp geometric thresholds of dual-scale transport. While the core topological mechanism---namely, the duality between the high-frequency H\"ormander class $S^{-2}_{1,0}$ and the low-frequency relaxation parameter $\alpha_0$---is expected to underpin a broader class of anomalous evolution problems, any explicit generalization to non-homogeneous or gradient-dependent sources would fundamentally alter the structure of these critical indices.

Specifically, treating a locally Lipschitz continuous source $f(u)$ with H\"older-type growth near the origin, or an asymptotic power law behavior as $|u| \to \infty$, would preserve the qualitative nature of the local flow and the instantaneous norm inflation. However, if the reaction depends explicitly on space-time potentials $\phi(x,t)$ or the spatial gradient $\nabla u$, the implementation of fixed-point contractive arguments would necessitate a complete restructuring of the functional framework. Absorbing gradient-dependent terms without violating the integrability barrier would require the development of a time-weighted Sobolev architecture $W^{1,q}(\mathbb{R}^N)$ and would induce complex dimensional shifts in the thresholds. Tracking the arithmetic variations of these shifted critical exponents demands disparate technical treatments, placing such extensions outside the scope of the present manuscript. Therefore, the analysis of the canonical model presented herein provides a foundational mathematical baseline upon which subsequent generalized multi-scale frameworks can be anchored.

\noindent \textbf{Organization of the paper.} The remainder of this work is organized as follows. Section 2 introduces the mathematical setting, defining the generalized class of admissible dual-scale kernels and establishing the mild integral framework. Section 3 is devoted to the core linear theory, extracting scale-invariant $L^q-L^p$ smoothing bounds via H\"ormander multiplier theory and dyadic frequency partitions. In Section 4, the critical Lebesgue threshold $q_c$ is determined to prove local Hadamard well-posedness, followed by a rigorous demonstration of instantaneous norm inflation in the supercritical regime. Section 5 addresses the global dynamics; it formalizes the spectral bridge constraint, establishes global-in-time existence, and proves the asymptotic linear decay of solutions. Finally, Appendix A provides the uniform spatial localization bounds via Peetre's inequality required for the ill-posedness analysis.

	%%%%%%%%%%%%%%%%%%%%%%%%%%%%%%%%%%%%%	%%%%%%%%%%%%%%%%%%%%%%%%%%%%%%%%%%%%%	%%%%%%%%%%%%%%%%%%%%%%%%%%%%%%%%%%%%%	%%%%%%%%%%%%%%%%%%%%%%%%%%%%%%%%%%%%%	%%%%%%%%%%%%%%%%%%%%%%%%%%%%%%%%%%%%%	%%%%%%%%%%%%%%%%%%%%%%%%%%%%%%%%%%%%%	%%%%%%%%%%%%%%%%%%%%%%%%%%%%%%%%%%%%%	%%%%%%%%%%%%%%%%%%%%%%%%%%%%%%%%%%%%%	%%%%%%%%%%%%%%%%%%%%%%%%%%%%%%%%%%%%%	%%%%%%%%%%%%%%%%%%%%%%%%%%%%%%%%%%%%%	%%%%%%%%%%%%%%%%%%%%%%%%%%%%%%%%%%%%%	%%%%%%%%%%%%%%%%%%%%%%%%%%%%%%%%%%%%%	%%%%%%%%%%%%%%%%%%%%%%%%%%%%%%%%%%%%%	%%%%%%%%%%%%%%%%%%%%%%%%%%%%%%%%%%%%%	%%%%%%%%%%%%%%%%%%%%%%%%%%%%%%%%%%%%%	%%%%%%%%%%%%%%%%%%%%%%%%%%%%%%%%%%%%%	%%%%%%%%%%%%%%%%%%%%%%%%%%%%%%%%%%%%%  	%%%%%%%%%%%%%%%%%%%%%%%%%%%%%%%%%%%%%

	\section{Mathematical setting}
	
	\subsection{Admissible kernels}
	The underlying linear dynamics are governed by the memory kernel $g \in L^1_{\text{loc}}(\mathbb{R}^+)$. Capturing the super-diffusive regime at high frequencies, while accommodating potential crossovers to classical or alternative anomalous diffusion regimes at low frequencies, relies on the analytic properties of its Laplace transform $\hat{g}(\lambda) = \mathcal{L}\{g(t)\}(\lambda)$, whose foundational mathematical framework traces back to the classical theory of Widder \cite{Widder1941}. It is assumed that $g$ belongs to the class of admissible kernels satisfying the following structural hypotheses:
	\begin{itemize} 
		\item \textbf{(H1) Asymptotic sectoriality, regularity, and compatibility:} The function $\hat{g}(\lambda)$ is analytic and non-vanishing in $\mathbb{C} \setminus (-\infty, 0]$. There exists a continuous function $\theta$ satisfying $\inf_{r > 0} \theta(r) > 0$ such that the roots of $\lambda + |\xi|^2 \hat{g}(\lambda) = 0$ are contained within the region $|\arg \lambda| \le \pi - \theta(|\lambda|)$. As $|\lambda| \to \infty$, the sector aperture stabilizes to $\theta(|\lambda|) \to \theta_\infty > 0$, satisfying the high-frequency compatibility condition
		\begin{equation}\label{hyp:compatibility_new} 
			\theta_\infty < \frac{\pi \alpha_\infty}{1+\alpha_\infty}, 
		\end{equation} 
		for some $\alpha_\infty \in (0,1)$. 
		\item \textbf{(H2) High-frequency asymptotics (short time):} There exist constants $C_1, C_2 > 0$ such that, for $|\lambda|$ sufficiently large in $\Sigma_{\theta_\infty}$: 
		\begin{equation}\label{hyp:h2} 
			C_1 |\lambda|^{-\alpha_\infty} \le |\hat{g}(\lambda)| \le C_2 |\lambda|^{-\alpha_\infty}. 
		\end{equation} 
		\item \textbf{(H3) Low-frequency asymptotics (long time):} There exist $\alpha_0 \in [0,1)$ and constants $c_1, c_2 > 0$ such that, for $|\lambda|$ sufficiently small: 
		\[ c_1 |\lambda|^{-\alpha_0} \le |\hat{g}(\lambda)| \le c_2 |\lambda|^{-\alpha_0}. \] 
	\end{itemize}

	To illustrate the physical relevance of the admissible kernel class, four canonical examples of anomalous transport are highlighted, with particular emphasis on their spectral configurations.

	\noindent \textbf{Example 1 (Pure fractional diffusion).} The standard time-fractional model is recovered by choosing the Riemann-Liouville kernel $g(t) = \frac{t^{\alpha-1}}{\Gamma(\alpha)}$ for $\alpha \in (0,1)$, a canonical structure in fractional calculus frameworks (see, e.g., Kilbas et al. \cite{Kilbas2006}). Its Laplace transform is $\hat{g}(\lambda) = \lambda^{-\alpha}$. This kernel satisfies (H1)-(H3) with scale invariance, where the high-frequency and low-frequency asymptotic exponents collapse to a single value given by $\alpha_\infty = \alpha_0 = \alpha$.

	\noindent \textbf{Example 2 (Multi-term fractional diffusion).} In complex viscoelastic media presenting structural heterogeneities, the transport mechanism often transitions between different anomalous regimes over time. These dynamics can be modeled by a sum of fractional kernels of the form
	\begin{equation*}
		g(t) = c_1 \frac{t^{\alpha-1}}{\Gamma(\alpha)} + c_2 \frac{t^{\beta-1}}{\Gamma(\beta)}, \quad \text{with } c_1, c_2 > 0 \text{ and } 0 < \beta < \alpha < 1.
	\end{equation*}
	The Laplace transform is $\hat{g}(\lambda) = c_1 \lambda^{-\alpha} + c_2 \lambda^{-\beta}$. The roots of $\hat{g}(\lambda) = 0$ satisfy $|\arg \lambda| = \frac{\pi}{\alpha-\beta} > \pi$, placing them outside the principal branch, which guarantees sectoriality (H1). As $|\lambda| \to \infty$, the behavior is dominated by $\beta$, yielding $\alpha_\infty = \beta$, whereas as $|\lambda| \to 0$, the $\alpha$ term dominates, leading to $\alpha_0 = \alpha$.

	\noindent \textbf{Example 3 (Fractional retardation and Cole-Cole relaxation).} Physical systems exhibiting a transition from an initial anomalous super-diffusive state to a standard Gaussian diffusion profile at long times are modeled via fractional retardation kernels, widely established in dielectric relaxation and viscoelasticity \cite{ColeCole1941, Mainardi2010}. The algebraic structure in the Laplace domain is given by
	\begin{equation*}
		\hat{g}(\lambda) = \frac{1}{\lambda^\alpha + \gamma}, \quad \text{for } \alpha \in (0,1) \text{ and } \gamma > 0,
	\end{equation*}
	corresponding to the time-domain memory kernel $g(t) = t^{\alpha-1} E_{\alpha, \alpha}(-\gamma t^\alpha)$, where $E_{\alpha,\alpha}$ represents the classical two-parameter Mittag-Leffler function.

	Verifying the asymptotic hypotheses requires evaluating the limits of the transform $\hat{g}(\lambda)$. As $|\lambda| \to \infty$, the scaling $\lambda^\alpha \gg \gamma$ implies $|\hat{g}(\lambda)| \sim |\lambda|^{-\alpha}$, establishing the short-time exponent $\alpha_\infty = \alpha \in (0,1)$, which satisfies (H2). Conversely, as $|\lambda| \to 0$, the term $\lambda^\alpha$ vanishes against the strictly positive constant $\gamma$, yielding $\lim_{\lambda \to 0} \hat{g}(\lambda) = \gamma^{-1}$. The low-frequency transform behaves as $\gamma^{-1}|\lambda|^0$, confirming the long-time classical crossover exponent $\alpha_0 = 0 \in [0,1)$, in agreement with (H3).

	The sectoriality condition (H1) dictates that the roots of the characteristic equation $D(\lambda) = \lambda + |\xi|^2 \hat{g}(\lambda) = 0$ remain bounded away from the negative real axis. Substituting $\hat{g}(\lambda)$ into $D(\lambda) = 0$ and clearing denominators yields the polynomial relation $\lambda^{1+\alpha} + \gamma \lambda + |\xi|^2 = 0$. Testing for the existence of roots along the principal branch cut is performed by mapping $\lambda = r e^{i\pi}$ for $r > 0$. Evaluating the imaginary part of the system yields
	\begin{equation*}
		\Im\left( r^{1+\alpha} e^{i\pi(1+\alpha)} + \gamma r e^{i\pi} + |\xi|^2 \right) = r^{1+\alpha} \sin(\pi + \pi\alpha) = -r^{1+\alpha} \sin(\pi\alpha).
	\end{equation*}
	Since $\alpha \in (0,1)$, it follows that $\sin(\pi\alpha) > 0$. Thus,
	\[ \Im(D(re^{i\pi})) = -r^{1+\alpha}\sin(\pi\alpha) < 0 \]
	for all $r > 0$ and any spatial frequency $|\xi|^2$. Therefore, the imaginary part cannot vanish on the branch cut, and no characteristic roots emerge on $(-\infty, 0]$. Given that $\hat{g}(\lambda)$ is analytic and non-vanishing on $\mathbb{C} \setminus (-\infty, 0]$, Cauchy's integral theorem permits the deformation of the Bromwich contour into a rigid Hankel structure enclosing the origin, establishing (H1) with a uniform margin $\theta(|\lambda|) > 0$.

	\noindent \textbf{Example 4 (Multi-scale Prabhakar memory).} Transport mechanisms in macroscopically disordered media or hierarchical networks often transition between two distinct anomalous regimes before reaching saturation. Such behavior is captured by multi-scale kernels whose framework is governed by the generalized Mittag-Leffler functions of Prabhakar \cite{Giusti2020, Prabhakar1971}. The transform is defined as
	\begin{equation*}
		\hat{g}(\lambda) = \frac{1}{\lambda^{\alpha_0} + \tau \lambda^{\alpha_\infty}}, \quad \text{with } 0 < \alpha_0 < \alpha_\infty < 1 \text{ and } \tau > 0.
	\end{equation*}
	For short times ($|\lambda| \to \infty$), the high-frequency growth is dominated by the higher power $\tau \lambda^{\alpha_\infty}$, yielding the algebraic control $|\hat{g}(\lambda)| \sim \tau^{-1}|\lambda|^{-\alpha_\infty}$, confirming (H2). For long times ($|\lambda| \to 0$), the low-frequency relaxation is dictated by the lower power $\lambda^{\alpha_0}$, leading to $|\hat{g}(\lambda)| \sim |\lambda|^{-\alpha_0}$, confirming (H3).

	To verify hypothesis (H1), the corresponding characteristic equation is written as $\lambda^{1+\alpha_0} + \tau \lambda^{1+\alpha_\infty} + |\xi|^2 = 0$. Testing for the emergence of singular poles on the negative real line via $\lambda = r e^{i\pi}$ ($r > 0$), the imaginary component resolves to
	\begin{equation*}
		\Im\left( r^{1+\alpha_0} e^{i\pi(1+\alpha_0)} + \tau r^{1+\alpha_\infty} e^{i\pi(1+\alpha_\infty)} + |\xi|^2 \right) = -r^{1+\alpha_0}\sin(\pi\alpha_0) - \tau r^{1+\alpha_\infty}\sin(\pi\alpha_\infty).
	\end{equation*}
	As both parameters satisfy $\alpha_0, \alpha_\infty \in (0,1)$, the trigonometric terms $\sin(\pi\alpha_0)$ and $\sin(\pi\alpha_\infty)$ are strictly positive. Their linear combination with negative signs forces 
	\[ \Im(D(re^{i\pi})) < 0 \] 
	for all $r > 0$. This absence of zeros on the negative real axis prevents phase cancellation, ensuring that the resolvent symbol remains uniformly coercive along adaptive contours, thereby satisfying hypothesis (H1).

	\begin{remark}
		The local well-posedness and short-time mapping properties developed in Sections 2 and 3 depend on the high-frequency parameter $\alpha_\infty \in (0,1)$. In contrast, the low-frequency parameter $\alpha_0 \in [0,1)$ encodes the global asymptotic integrability required to establish the existence of global-in-time solutions and their decay rates.
	\end{remark}

	\subsection{Mild framework}
	The mild integral formulation of \eqref{eq:main} follows from the variation of constants formula. Applying the spatial Fourier transform---in the sense of tempered distributions $\mathcal{S}'(\mathbb{R}^N)$---alongside the temporal Laplace transform yields an algebraic relation. The spatial Fourier, temporal Laplace, and mixed Fourier-Laplace transforms are uniformly denoted by $\widehat{\cdot}$ (where the specific integral operator is identifiable by its dual variables $\xi$, $\lambda$, or both). Transforming the governing equation gives
	\begin{equation*}
		\lambda \hat{u}(\xi,\lambda) - \hat{u}_0(\xi) = -|\xi|^2 \hat{g}(\lambda) \hat{u}(\xi,\lambda) + \widehat{f(u)}(\xi,\lambda).
	\end{equation*}
	Algebraic manipulation isolates the state variable $\hat{u}(\xi,\lambda)$, leading to
	\begin{equation}\label{eq:algebraic_state}
		\hat{u}(\xi,\lambda) = \frac{1}{\lambda + |\xi|^2 \hat{g}(\lambda)} \hat{u}_0(\xi) + \frac{1}{\lambda + |\xi|^2 \hat{g}(\lambda)} \widehat{f(u)}(\xi,\lambda).
	\end{equation}
	The resolvent operator $S(t)$ is defined through its Fourier symbol $\widehat{S}(t,\xi)$, which is obtained by the inverse Laplace transform (Bromwich integral) of the transfer function (see, e.g., Pr\"uss \cite[Chapter 1]{Pruss1993}, Arendt et al. \cite{Arendt2011}, and Bajlekova \cite{Bajlekova2001}):
	\begin{equation*}
		\widehat{S}(t,\xi) = \mathcal{L}^{-1} \left\{ \frac{1}{\lambda + |\xi|^2 \hat{g}(\lambda)} \right\}(t) = \frac{1}{2\pi i} \int_{\Gamma} \frac{e^{\lambda t}}{\lambda + |\xi|^2 \hat{g}(\lambda)} d\lambda,
	\end{equation*}
	where $\Gamma$ is a Hankel contour circumventing the negative real axis, as justified by (H1). By the convolution theorem for Laplace transforms, the inverse Laplace operator maps the nonlinear component of \eqref{eq:algebraic_state} to
	\begin{equation*}
		\mathcal{L}^{-1} \left\{ \widehat{S}(\cdot,\xi) \widehat{f(u)}(\cdot,\xi) \right\}(t) = \int_0^t \widehat{S}(t-s,\xi) \mathcal{F}\{f(u(\cdot,s))\}(\xi) ds.
	\end{equation*}
	Applying the inverse Fourier transform to this expression motivates the following definition. 
	\begin{de}
		A mild solution to \eqref{eq:main} is a continuous function $u:[0,T]\to L^{q}(\mathbb{R}^N)$ satisfying the integral equation
		\begin{equation*}
			u(t) = S(t)u_0 + \int_0^t S(t-s)u(s)|u(s)|^{\rho-1} ds.
		\end{equation*}
	\end{de}

	\section{Linear estimates}
	Establishing local well-posedness in $L^q(\mathbb{R}^N)$ relies on extracting the $L^q-L^p$ smoothing properties of the resolvent operator $S(t)$. Because the spatial kernel associated with $S(t)$ lacks global integrability in high dimensions within the super-diffusive regime, the operator bounds are obtained via Fourier multiplier theory, fractional Sobolev embeddings, and scaling arguments.
	
	\begin{lema}[$L^q-L^p$ smoothing estimates]\label{lem:smoothing}
		Let $g$ satisfy hypotheses (H1) and (H2), and let $T_0 > 0$. Assume that $1 \le q \le p < \infty$ satisfies the dimensional restriction
		\begin{equation}\label{eq:dimensional_restriction}
			\frac{1}{q} - \frac{1}{p} < \frac{2}{N}.
		\end{equation}
		Then, there exists a constant $C > 0$, independent of $t \in (0, T_0]$ and $u_0 \in L^q(\mathbb{R}^N)$, such that the resolvent operator satisfies
		\begin{equation*}
			\|S(t)u_0\|_{L^p(\mathbb{R}^N)} \le C t^{-\gamma_{q,p}} \|u_0\|_{L^q(\mathbb{R}^N)},
		\end{equation*}
		where $\gamma_{q,p} = \frac{N(1+\alpha_\infty)}{2}\left(\frac{1}{q} - \frac{1}{p}\right)$.
	\end{lema}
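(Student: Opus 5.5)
The plan is to analyse the Fourier symbol $\widehat{S}(t,\xi)$ directly and establish a symbol bound of the form
\[
|\partial_\xi^\beta \widehat{S}(t,\xi)| \le C_\beta\,\bigl(1+t^{1+\alpha_\infty}|\xi|^2\bigr)^{-1}|\xi|^{-|\beta|}, \qquad 0<t\le T_0,
\]
valid for all multi-indices $\beta$. Such a bound says that $\widehat{S}(t,\cdot)$ behaves like the rescaled symbol $m(t^{(1+\alpha_\infty)/2}\xi)$ with $m\in S^{-2}_{1,0}$, uniformly in $t\in(0,T_0]$. The $L^q$--$L^p$ estimate with exponent $\gamma_{q,p}$ then follows by scaling.

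To obtain the symbol bound, I will deform the Bromwich contour into a Hankel contour $\Gamma$ inside $|\arg\lambda|\le \pi-\theta$, which (H1) permits. There are two regimes.

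\textbf{Regime $|\xi|^2 t^{1+\alpha_\infty}\lesssim 1$.} Here I use the contour $|\lambda|=1/t$ together with its rays. Hypothesis (H2) gives $|\lambda+|\xi|^2\hat g(\lambda)|\gtrsim |\lambda|$ along $\Gamma$, because the sector condition from (H1) keeps the two terms from cancelling. This yields $|\widehat{S}|\le C$ uniformly.

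\textbf{Regime $|\xi|^2 t^{1+\alpha_\infty}\gg 1$.} Here I expand
\[
\frac{1}{\lambda+|\xi|^2\hat g}=\frac{1}{|\xi|^2\hat g(\lambda)}-\frac{\lambda}{|\xi|^2\hat g(\lambda)\,(\lambda+|\xi|^2\hat g)}.
\]
The first term contributes $|\xi|^{-2}\mathcal L^{-1}\{1/\hat g\}(t)$. By (H2), $1/\hat g\sim\lambda^{\alpha_\infty}$ at infinity, which produces a size $t^{-1-\alpha_\infty}|\xi|^{-2}$; bounded frequencies of $\lambda$ contribute only $O(1)$ on $(0,T_0]$. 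For the second term I place the contour at $|\lambda|\sim |\xi|^{2/(1+\alpha_\infty)}$, which is where $\lambda\sim|\xi|^2\hat g(\lambda)$. A direct estimate then gives the same decay $\lesssim (t^{1+\alpha_\infty}|\xi|^2)^{-1}$. This is the algebraic tail; it is not exponential, because $1/\hat g$ is not entire.

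For the $\xi$-derivatives, each derivative falling on $(\lambda+|\xi|^2\hat g)^{-1}$ produces a factor $|\xi|\,|\hat g|\,|\lambda+|\xi|^2\hat g|^{-1}$. On the chosen contours this factor is bounded by $C|\xi|^{-1}$, and this gives the Mikhlin--Hörmander bounds.

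With the symbol bound in hand, I write $\widehat{S}(t,\xi)=\mu(t^{(1+\alpha_\infty)/2}\xi)$, where $\mu=\mu_t$ is a family of symbols whose seminorms in $S^{-2}_{1,0}$ are uniform for $t\le T_0$. A change of variables reduces the claim to the bound $\|\mu(D)\|_{L^q\to L^p}\le C$ uniformly in $t$; the factor $t^{-\gamma_{q,p}}$ comes out of the dilation. To prove the uniform bound, I split $\mu=\mu\chi+\mu(1-\chi)$ with a cutoff $\chi$ near the origin.

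\textbf{Low-frequency part.} The piece $\mu\chi$ has a Schwartz-type kernel, bounded in $L^1\cap L^\infty$ by the uniform seminorms. Young's inequality then gives $L^q\to L^p$ boundedness.

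\textbf{High-frequency part.} I factor $\mu(1-\chi)=\bigl[\mu(1-\chi)(1+|\xi|^2)\bigr]\cdot(1+|\xi|^2)^{-1}$. The bracket is a Mikhlin multiplier, bounded on $L^q$ for $1<q<\infty$. The Bessel potential $(1-\Delta)^{-1}$ maps $L^q\to L^p$ when $\tfrac1q-\tfrac1p<\tfrac2N$; this is exactly restriction \eqref{eq:dimensional_restriction}. The strict inequality allows the Bessel kernel $G_2\in L^r$ with $1+\tfrac1p=\tfrac1r+\tfrac1q$, so the $q=1$ case also goes through by Young's inequality.

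For the endpoint $q=1$, I will instead estimate the kernel of $\mu(1-\chi)$ directly. It behaves like $|x|^{2-N}$ near the origin and decays rapidly away from it, which places it in $L^r$ for the required $r$, and Young's inequality then applies.

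The main obstacle I expect is making the contour estimates uniform. The difficulty is choosing a $\xi$- and $t$-dependent Hankel contour on which $|\lambda+|\xi|^2\hat g(\lambda)|\gtrsim |\lambda|+|\xi|^2|\hat g(\lambda)|$ holds, using only the asymptotic sector and compatibility condition \eqref{hyp:compatibility_new}. In particular, in the intermediate region where (H2) is not yet valid, I will have to rely on continuity and compactness of $\hat g$ away from the cut, absorbing those contributions into constants depending on $T_0$.
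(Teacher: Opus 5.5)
Your architecture is the paper's. You rescale to $\eta=t^{(1+\alpha_\infty)/2}\xi$, show the rescaled symbol is uniformly smooth near $\eta=0$ and uniformly in $S^{-2}_{1,0}$, apply Mikhlin plus a two-derivative Bessel/Sobolev step (kernel bound and Young at $q=1$), and finish by dilation. Several pieces are sound, some more carefully than in the paper:
\begin{itemize}
\item For $t^{1+\alpha_\infty}|\xi|^2\lesssim 1$, a contour at scale $C/t$ with $C$ large makes $\lambda$ dominate $|\xi|^2\hat g(\lambda)$ by size alone.
\item The leading term $|\xi|^{-2}\mathcal{L}^{-1}\{1/\hat g\}(t)$ needs only the modulus bounds of (H2), since $1/\hat g$ has no poles.
\item Compactness in $t$ covers an arbitrary $T_0$, where the paper simply takes $T_0$ small.
\end{itemize}

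The genuine gap is the step you defer as ``the main obstacle''. For $t^{1+\alpha_\infty}|\xi|^2\gg 1$ you need $|\lambda+|\xi|^2\hat g(\lambda)|\gtrsim|\lambda|+|\xi|^2|\hat g(\lambda)|$ on the contour, and you need to know where the moving poles lie. Neither follows from (H1), the compatibility condition and compactness. Compactness only handles bounded $|\lambda|$, while the difficulty sits at $|\lambda|\sim|\xi|^{2/(1+\alpha_\infty)}\to\infty$. (H1) only says the roots avoid a sector, and (H2) only bounds $|\hat g|$. In fact $\hat g(\lambda)=-\lambda^{-\alpha}$ satisfies (H1)--(H2) as written, since the only root of $\lambda^{1+\alpha}=|\xi|^2$ in the cut plane is positive real. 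Yet $\widehat S(t,\xi)=E_{1+\alpha}(|\xi|^2t^{1+\alpha})$ grows exponentially in $\xi$.

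The paper obtains coercivity from \emph{phase} information. It uses that $\lambda^{\alpha_\infty}\hat g(\lambda)$ tends to a positive constant, which it reads into (H2). Then on the rays $\mu=re^{\pm i\phi_c}$ with $\pi/(1+\alpha_\infty)<\phi_c<\pi$, the phase difference $\arg\mu-\arg m(t,\mu)\approx(1+\alpha_\infty)\phi_c\in(\pi,2\pi)$ stays away from $\pi$. The law of cosines then gives $|D(\mu,\eta)|\ge c\,(r+|\eta|^2|m(t,\mu)|)$. The same input places the poles near $\arg\mu=\pm\pi/(1+\alpha_\infty)$, strictly inside the left half-plane, so their residues are $O(e^{-c|\eta|^{2/(1+\alpha_\infty)}})$. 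You have to import this hypothesis explicitly. Your derivative bound $|\xi||\hat g|/|\lambda+|\xi|^2\hat g|\lesssim|\xi|^{-1}$ rests on the same coercivity.

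There is also a more local error. For the remainder $\lambda/\bigl(|\xi|^2\hat g\,(\lambda+|\xi|^2\hat g)\bigr)$ you cannot put the arc of the Hankel contour at radius $\sim|\xi|^{2/(1+\alpha_\infty)}$. That arc crosses the positive real axis, where $|e^{\lambda t}|=\exp\bigl(c\,(t^{1+\alpha_\infty}|\xi|^2)^{1/(1+\alpha_\infty)}\bigr)$ is exponentially large in your regime. The integrand has size $\sim|\lambda|^{-1}$ on an arc of length $\sim|\lambda|$, so the direct estimate diverges.

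Keep the arc at the time scale $|\lambda|\sim 1/t$, which is the paper's fixed contour $\Gamma'$ in $\mu=\lambda t$. Send the rays either below the pole angle $\pi/(1+\alpha_\infty)$ (no residues) or above it, collecting residues as the paper does. Once coercivity is available, the remainder is even $O\bigl((t^{1+\alpha_\infty}|\xi|^2)^{-2}\bigr)$, and the rest of your plan goes through.
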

	\begin{proof}
		For $t \in (0, T_0]$, isolating the temporal singularity requires scaling the Laplace variable as $\mu = \lambda t$ within the Bromwich integral, thereby defining the deformation function 
		\[ H(t,\mu) := (\mu/t)^{\alpha_\infty} \hat{g}(\mu/t). \] 
		By choosing $T_0$ sufficiently small, the scaled variable $|\lambda| = |\mu/t|$ lies in the high-frequency domain for any $\mu$ separated from the origin, ensuring the uniform validity of \eqref{hyp:h2}. Consequently, $H(t,\mu)$ remains uniformly bounded above and away from zero. The multiplier scales as $\widehat{S}(t,\xi) = \widetilde{\Psi}(t,\eta)$, with $\eta = \xi t^{\frac{1+\alpha_\infty}{2}}$ and
		\begin{equation*}
			\widetilde{\Psi}(t,\eta) = \frac{1}{2\pi i} \int_{\Gamma'} \frac{e^\mu}{\mu + |\eta|^2 \mu^{-\alpha_\infty} H(t,\mu)} d\mu.
		\end{equation*}
		
		Since the symbol exhibits distinct asymptotic regimes near the origin and at high frequencies, partitioning the frequency space is necessary. Introducing a smooth cut-off $\chi \in C_0^\infty(\mathbb{R}^N)$ with $\chi(\eta) = 1$ for $|\eta| \le 1$ and $\chi(\eta) = 0$ for $|\eta| \ge 2$, the symbol decomposes as 
		\[ \widetilde{\Psi} = \Psi_{\mathrm{low}} + \Psi_{\mathrm{high}}, \] 
		where $\Psi_{\mathrm{low}} = \chi \widetilde{\Psi}$ and $\Psi_{\mathrm{high}} = (1-\chi) \widetilde{\Psi}$.
		
		\noindent\textbf{Step 1: Low-frequency regime ($\Psi_{\mathrm{low}}$).} 
		For $|\eta| \le 2$, the roots of the denominator satisfy $|\mu_k| \sim |\eta|^{\frac{2}{1+\alpha_\infty}}$ and remain confined to a bounded region in the open left half-plane. Cauchy's Theorem allows the integral to be anchored along a fixed Hankel path 
		\[ \Gamma_1 = \{ \delta + r e^{\pm i\phi} : r \ge 0 \}. \] 
		Choosing $\delta > 0$ and an angle $\phi \in (\pi/2, \pi)$ ensures this contour encloses the origin and all potential roots, while remaining within the domain of analyticity off the principal branch cut as guaranteed by (H1). Along $\Gamma_1$, the denominator is uniformly coercive. The exponential decay $e^{r\cos\phi}$ allows for infinite differentiation under the integral sign, yielding $\widetilde{\Psi}(t,\cdot) \in C^\infty$ for $|\eta| \le 2$. Thus, the truncated symbol $\Psi_{\mathrm{low}}(t,\cdot)$ belongs uniformly to the Schwartz space $\mathcal{S}(\mathbb{R}^N)$, producing a bounded $L^q \to L^p$ operator.
		
		\noindent\textbf{Step 2: High-frequency regime ($\Psi_{\mathrm{high}}$).} 
		For $|\eta| \ge 1$, the dynamics are governed by the moving poles. The compatibility condition \eqref{hyp:compatibility_new} ensures that the asymptotic phases of the roots, $\pm \frac{\pi}{1+\alpha_\infty}$, lie within the sector of aperture $\pi - \theta_\infty$. Deforming the contour into the left half-plane without leaving the domain of analyticity splits the integral into a residue component ($\widetilde{\Psi}_{\mathrm{res}}$) and a branch cut component ($\widetilde{\Psi}_{\mathrm{cut}}$). The residue contribution, evaluated at the poles $\mu_k$, satisfies
		\begin{equation*}
			|(1-\chi(\eta)) \widetilde{\Psi}_{\mathrm{res}}(t, \eta)| \le C_1 \exp\left( -c |\eta|^{\frac{2}{1+\alpha_\infty}} \right), \quad c > 0.
		\end{equation*}
		This rapid exponential decay dominates any polynomial growth, implying that the corresponding spatial kernel belongs uniformly to the Schwartz space $\mathcal{S}(\mathbb{R}^N)$, thereby mapping $L^q \to L^p$ boundedly for any $1 \le q \le p \le \infty$.
		
		Evaluating the branch cut contribution $\widetilde{\Psi}_{\mathrm{cut}}$ and establishing the uniform $L^q \to L^p$ bounding properties of this high-frequency remainder require ensuring that the deformed contour maintains a non-degenerate lower bound against the moving poles. Let the characteristic denominator be denoted by
		\begin{equation}\label{eq:app_den}
			D(\mu, \eta) = \mu + |\eta|^2 \mu^{-\alpha_\infty} H(t,\mu),
		\end{equation}
		where the evaluation occurs along the rays $\mu = r e^{\pm i \phi_c}$. The high-frequency compatibility condition \eqref{hyp:compatibility_new} restricts the sectorial aperture to $\theta_\infty < \frac{\pi \alpha_\infty}{1+\alpha_\infty}$, permitting the choice of a cutting phase $\phi_c$ such that
		\begin{equation}\label{eq:phase_restriction}
			\frac{\pi}{1+\alpha_\infty} < \phi_c < \pi.
		\end{equation}
		
		Characterizing the phase interaction within the non-local symbol prevents destructive cancellation in supercritical regimes. Letting $m(t,\mu) = \mu^{-\alpha_\infty} H(t,\mu)$, hypothesis (H2) dictates that as $t \to 0^+$ and $|\mu/t| \to \infty$, the modulation function converges to $C_1 \in \mathbb{R}^+$; consequently, the asymptotic phase angle of $m(t,\mu)$ reduces to $\theta_m \approx -\alpha_\infty \phi_c$. The relative phase separation between the linear transport component $\mu$ and the memory amplification term $|\eta|^2 m(t,\mu)$ is given by
		\begin{equation*}
			\Phi = \arg(\mu) - \arg(m(t,\mu)) \approx \phi_c - (-\alpha_\infty \phi_c) = (1+\alpha_\infty)\phi_c.
		\end{equation*}
		Enforcing the boundary condition \eqref{eq:phase_restriction} ensures the phase parameter $\Phi$ satisfies the geometric constraint $\pi < \Phi < (1+\alpha_\infty)\pi < 2\pi$. Given that $\Phi$ is bounded away from the boundaries of the interval, there exists a structural constant $\delta_0 = \delta_0(\alpha_\infty, \phi_c) > 0$ such that $\cos(\Phi) \ge -1 + \delta_0 > -1$ for all $\mu$ along the branch cut contour.
		
		Applying the law of cosines to the algebraic sum in \eqref{eq:app_den} establishes the uniform coercivity of the symbol denominator, yielding
		\begin{equation}\label{eq:cosines_expansion}
			|D(\mu, \eta)|^2 = r^2 + |\eta|^4 |m|^2 + 2 r |\eta|^2 |m| \cos(\Phi).
		\end{equation}
		Substituting the uniform lower bound for $\cos(\Phi)$ into \eqref{eq:cosines_expansion} provides
		\begin{align*}
			|D(\mu, \eta)|^2 &\ge r^2 + |\eta|^4 |m|^2 - 2(1-\delta_0) r |\eta|^2 |m| \\
			&= (1-\delta_0) \left( r - |\eta|^2 |m| \right)^2 + \delta_0 \left( r^2 + |\eta|^4 |m|^2 \right).
		\end{align*}
		Since the first quadratic term is non-negative, dropping it preserves the inequality. Taking the square root of both sides and applying the elementary algebraic inequality $\sqrt{a^2 + b^2} \ge \frac{1}{\sqrt{2}}(a+b)$ isolates the standard coercive limit
		\begin{equation*}
			|D(\mu, \eta)| \ge \sqrt{\frac{\delta_0}{2}} \left( r + |\eta|^2 |m| \right) \ge C_{\alpha_\infty} \left( r + C_1 |\eta|^2 r^{-\alpha_\infty} \right).
		\end{equation*}
		
		The uniformity of the prefactor $C_{\alpha_\infty} > 0$ confirms that the symbol denominator never experiences scale degeneration. With this lower bound secured, the spatial derivatives are evaluated. The first-order gradient is given by $\nabla_\eta D = 2\eta m(t,\mu)$. Applying the coercive bound $|D| \ge C_{\alpha_\infty} C_1 |\eta|^2 |m|$ leads directly to
		\begin{equation*}
			|\nabla_\eta D| \le 2|\eta| |m| = 2|\eta|^{-1} (|\eta|^2 |m|) \le C |\eta|^{-1} |D|.
		\end{equation*}
		Consequently, the first derivative of the inverse denominator satisfies
		\begin{equation*}
			|\nabla_\eta (D^{-1})| = |-D^{-2} \nabla_\eta D| \le C |D|^{-2} |\eta|^{-1} |D| = C |D|^{-1} |\eta|^{-1}.
		\end{equation*}
		Proceeding by induction and applying Fa\`a di Bruno's formula for higher-order compositions, each subsequent spatial derivative extracts an additional factor of $|\eta|^{-1}$ while preserving the structural denominator $|D|^{-1}$. Since the evaluation is restricted to the support $|\eta| \ge 1$, we obtain the generalized bound
		\begin{equation*}
			|\nabla_\eta^\beta (D^{-1})| \le C_\beta |D|^{-1} |\eta|^{-|\beta|}.
		\end{equation*}
		
		Integrating this expression against the exponential factor $e^{-r |\cos \phi_c|} dr$ along the cut yields
		\begin{equation*}
			|\nabla_\eta^\beta \widetilde{\Psi}_{\mathrm{cut}}(t, \eta)| \le C_\beta \int_0^\infty \frac{e^{-r|\cos \phi_c|}}{|D|} |\eta|^{-|\beta|} dr.
		\end{equation*}
		Extracting the algebraic decay requires utilizing the coercive bound $|D| \ge C_{\alpha_\infty}(r + C_1 |\eta|^2 r^{-\alpha_\infty})$ and partitioning the integration domain. For the principal term ($|\beta| = 0$), we obtain
		\begin{align*}
			\int_0^\infty \frac{e^{-r|\cos \phi_c|}}{r + C_1 |\eta|^2 r^{-\alpha_\infty}} dr &\le \int_0^1 \frac{1}{C_1 |\eta|^2 r^{-\alpha_\infty}} dr + \int_1^\infty \frac{e^{-r|\cos \phi_c|}}{C_1 |\eta|^2 r^{-\alpha_\infty}} dr \\
			&= \frac{1}{C_1 |\eta|^2} \left( \int_0^1 r^{\alpha_\infty} dr + \int_1^\infty r^{\alpha_\infty} e^{-r|\cos \phi_c|} dr \right).
		\end{align*}
		As $\alpha_\infty \in (0,1)$, the first integral converges exactly to $(1+\alpha_\infty)^{-1}$. The second integral is bounded by $C_{\phi_c} \Gamma(1+\alpha_\infty)$, where $C_{\phi_c}$ depends exclusively on the fixed phase $\phi_c$. Since these evaluations are independent of the parameter transitions inherent to the multi-scale kernel, the structural constant remains uniformly bounded. This establishes that the full integration is bounded by $C' |\eta|^{-2}$. Factoring in the general derivative weight $|\eta|^{-|\beta|}$ and bounding $|\eta|^{-|\beta|} \le C (1+|\eta|)^{-|\beta|}$ across the high-frequency domain produces the required algebraic decay
		\begin{equation*}
			|\nabla_\eta^\beta (\Psi_{\mathrm{high}})(t, \eta)| \le C_\beta'' (1 + |\eta|)^{-2-|\beta|}.
		\end{equation*}
		
		This estimate demonstrates that $\Psi_{\mathrm{high}}(t, \cdot)$ belongs uniformly to the H\"ormander symbol class $S^{-2}_{1,0}$. Let $T_t$ be the associated pseudo-differential operator. For $q > 1$, the Mikhlin-H\"ormander multiplier theorem ensures $T_t$ maps $L^q(\mathbb{R}^N) \to W^{2,q}(\mathbb{R}^N)$. Applying the continuous Sobolev embedding $W^{2,q}(\mathbb{R}^N) \hookrightarrow L^p(\mathbb{R}^N)$---enforcing the dimensional constraint \eqref{eq:dimensional_restriction}---it follows that $T_t$ acts as a bounded linear map from $L^q(\mathbb{R}^N)$ to $L^p(\mathbb{R}^N)$.
		
		Because the continuous Sobolev embedding fails at the critical endpoint $q=1$, analyzing the spatial convolution kernel associated with the high-frequency multiplier becomes necessary. Defined by 
		\[\mathcal{G}_t(w) := \mathcal{F}^{-1}(\Psi_{\mathrm{high}}(t, \cdot))(w),\] 
		the symbol's membership in $S^{-2}_{1,0}$ dictates via classical pseudo-differential theory that $\mathcal{G}_t(w)$ inherits explicit asymptotic properties (detailed in Remark \ref{rem:kernel_singularity}). Specifically, $\mathcal{G}_t$ exhibits a localized singularity of order $\mathcal{O}(|w|^{-(N-2)})$ near the origin for $N \ge 3$ (and a logarithmic singularity for $N=2$), while maintaining rapid polynomial decay at infinity. This property guarantees that $\mathcal{G}_t \in L^r(\mathbb{R}^N)$ for any $1 \le r < \frac{N}{N-2}$. Applying Young's convolution inequality to $T_t v = \mathcal{G}_t \ast v$ requires the kernel to be in $L^r(\mathbb{R}^N)$ with $1 + \frac{1}{p} = \frac{1}{1} + \frac{1}{r}$, which simplifies directly to $r = p$. The required integrability $p < \frac{N}{N-2}$ recovers the endpoint dimensional restriction \eqref{eq:dimensional_restriction}, namely $1 - \frac{1}{p} < \frac{2}{N}$. This yields the uniform bound $\|T_t v\|_{L^p} \le C \|v\|_{L^1}$ without topological degeneration.
		
		\noindent\textbf{Step 3: Temporal scaling.} 
		With the spatial regularizing properties established for the unscaled multiplier $\widetilde{\Psi}(t,\eta)$, the temporal decay is recovered via spatial dilation. Let $T_t$ denote the bounded pseudo-differential operator associated with the symbol $\widetilde{\Psi}(t,\eta)$, acting on the scaled frequency variable $\eta$. Setting the temporal dilation parameter $\kappa = t^{\frac{1+\alpha_\infty}{2}}$, the spatial frequency satisfies $\eta = \kappa \xi$. The full resolvent then acts as 
		\[S(t)u_0(x) = \mathcal{F}^{-1}(\widetilde{\Psi}(t, \kappa \xi) \hat{u}_0(\xi))(x).\]
		
		Defining the dilated initial datum $u_{0,\kappa}(x) = u_0(\kappa x)$ gives $\widehat{u_{0,\kappa}}(\xi) = \kappa^{-N} \hat{u}_0(\kappa^{-1}\xi)$. The action of the unscaled operator $T_t$ on this dilated profile yields
		\begin{equation*}
			T_t(u_{0,\kappa})(y) = \frac{1}{(2\pi)^N} \int_{\mathbb{R}^N} e^{iy \cdot \eta} \widetilde{\Psi}(t, \eta) \kappa^{-N} \hat{u}_0(\kappa^{-1}\eta) d\eta = S(t)u_0(y\kappa).
		\end{equation*}
		Taking the $L^p$ norm of both sides leads to
		\begin{equation*}
			\|S(t)u_0\|_{L^p}^p = \int_{\mathbb{R}^N} |T_t(u_{0,\kappa})(\kappa^{-1} x)|^p dx = \kappa^N \|T_t(u_{0,\kappa})\|_{L^p}^p.
		\end{equation*}
		Combining this identity with the uniform operator bound $\|T_t(u_{0,\kappa})\|_{L^p} \le C \|u_{0,\kappa}\|_{L^q}$ and the intrinsic scaling $\|u_{0,\kappa}\|_{L^q} = \kappa^{-N/q} \|u_0\|_{L^q}$ yields
		\begin{equation*}
			\|S(t)u_0\|_{L^p} = \kappa^{N/p} \|T_t(u_{0,\kappa})\|_{L^p} \le C \kappa^{N/p} \kappa^{-N/q} \|u_0\|_{L^q}.
		\end{equation*}
		Substituting the scaling parameter $\kappa = t^{\frac{1+\alpha_\infty}{2}}$ produces the required temporal decay
		\begin{equation*}
			\|S(t)u_0\|_{L^p} \le C t^{-\frac{N(1+\alpha_\infty)}{2}\left(\frac{1}{q} - \frac{1}{p}\right)} \|u_0\|_{L^q},
		\end{equation*}
		which concludes the proof.
	\end{proof}
	
	\begin{remark}[Sobolev integrability barrier and classical parabolic theory]
		The dimensional restriction \eqref{eq:dimensional_restriction} represents a fundamental departure from classical parabolic theory. For the standard heat equation $\partial_t u = \Delta u$, the Gaussian symbol exhibits exponential decay, yielding infinite regularizing capacity independent of the dimension $N$. In contrast, the super-diffusive memory operator introduces an algebraic tail at high frequencies (dictated by $\widetilde{\Psi}_{\mathrm{cut}}$), limiting the regularization to a gain of exactly two spatial derivatives. Consequently, the gain in Lebesgue integrability is governed by the continuous Sobolev embedding $W^{2,q}(\mathbb{R}^N) \hookrightarrow L^p(\mathbb{R}^N)$. While classical Sobolev theory permits the critical equality $\frac{1}{q} - \frac{1}{p} = \frac{2}{N}$, the super-diffusive nature of our operator imposes the integrability barrier
		\[ \frac{1}{q} - \frac{1}{p} < \frac{2}{N} \]
		to prevent non-integrable singularities in the Bessel potential.
	\end{remark}
	
	\begin{remark}\label{rem:kernel_singularity}
		From the classical theory of pseudo-differential operators, a multiplier belonging to the H\"ormander class $S^{-2}_{1,0}$ determines a spatial convolution kernel $\mathcal{G}_t(w)$ which is smooth away from the origin and exhibits rapid polynomial decay at infinity. Near the origin, however, it encapsulates a localized singularity governed by the dimension of the space. Specifically, for $N \ge 3$, the kernel behaves asymptotically as $\mathcal{O}(|w|^{-(N-2)})$, yielding the uniform pointwise constraint
		\begin{equation}\label{eq:pointwise_kernel_N3}
			\sup_{w \in \mathbb{R}^N} |w|^{N-2} (1+|w|)^{M} |\mathcal{G}_t(w)| \le C_{\mathcal{G}} < \infty,
		\end{equation}
		for an arbitrarily large decay exponent $M > 0$. Alternatively, in the critical two-dimensional geometry $N = 2$, the local singularity is characteristically logarithmic, satisfying
		\begin{equation}\label{eq:pointwise_kernel_N2}
			\sup_{w \in \mathbb{R}^N} \left[ \ln\left(1 + \frac{2}{|w|}\right) \right]^{-1} (1+|w|)^{M} |\mathcal{G}_t(w)| \le C_{\mathcal{G}} < \infty.
		\end{equation}
		Conversely, in the one-dimensional case $N = 1$, the algebraic decay of the symbol of order $-2$ is sufficient to guarantee global integrability in frequency space. Consequently, by the Riemann-Lebesgue lemma, the spatial kernel $\mathcal{G}_t(w)$ is globally bounded and continuous, possessing no singularity at the origin and satisfying
		\begin{equation}\label{eq:pointwise_kernel_N1}
			\sup_{w \in \mathbb{R}} (1+|w|)^{M} |\mathcal{G}_t(w)| \le C_{\mathcal{G}} < \infty,
		\end{equation}
		where the structural prefactors remain uniformly bounded across transient time scales.
	\end{remark}

	\begin{lema}[Asymptotic $L^m-L^p$ smoothing estimates]\label{lem:smoothing_asymptotic}
		Let $g$ satisfy hypotheses (H1), (H2), and (H3). Assume that $1 \le m \le p < \infty$ satisfies the dimensional restriction
		\begin{equation}\label{eq:dimensional_restriction_asymptotic}
			\frac{1}{m} - \frac{1}{p} < \frac{2}{N}.
		\end{equation}
		Then, there exists a constant $C > 0$, independent of $t \ge 1$ and $u_0 \in L^m(\mathbb{R}^N)$, such that the resolvent operator satisfies
		\begin{equation*}
			\|S(t)u_0\|_{L^p(\mathbb{R}^N)} \le C t^{-\beta_{m,p}} \|u_0\|_{L^m(\mathbb{R}^N)},
		\end{equation*}
		where $\beta_{m,p} = \frac{N(1+\alpha_0)}{2}\left(\frac{1}{m} - \frac{1}{p}\right)$.
	\end{lema}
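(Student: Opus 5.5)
The plan is to mirror the three-step structure of the proof of Lemma \ref{lem:smoothing}, but with the roles of high and low frequencies in the Laplace variable exchanged. For $t \ge 1$ we rescale $\mu = \lambda t$ in the Bromwich integral. This time the modulation function is
\[
H_0(t,\mu) := (\mu/t)^{\alpha_0}\hat g(\mu/t),
\]
and the dilation parameter is $\kappa = t^{\frac{1+\alpha_0}{2}}$, with $\eta = \kappa \xi$. We then write $\widehat S(t,\xi) = \widetilde\Psi_0(t,\eta)$, where
\[
\widetilde\Psi_0(t,\eta)=\frac{1}{2\pi i}\int_{\Gamma'}\frac{e^{\mu}}{\mu+|\eta|^2\mu^{-\alpha_0}H_0(t,\mu)}\,d\mu .
\]
The goal is to show that the operator $T_t$ with symbol $\widetilde\Psi_0(t,\cdot)$ maps $L^m\to L^p$ with a bound uniform in $t\ge 1$. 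Once that is known, the dilation identity from Step 3 of Lemma \ref{lem:smoothing} transfers verbatim and gives $\|S(t)u_0\|_{L^p}\le C\kappa^{N/p-N/m}\|u_0\|_{L^m}=Ct^{-\beta_{m,p}}\|u_0\|_{L^m}$.

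For the uniform bound we split $\mu$ along the contour into $|\mu|\le R$ and $|\mu|> R$. On the bounded part $|\mu|\le R$ we have $|\mu/t|\le R/t$, which is small once $t$ is large. By (H3), $H_0$ is then bounded above and away from zero, so the argument of Lemma \ref{lem:smoothing} applies with $\alpha_\infty$ replaced by $\alpha_0$. The cut-off $\chi$ again yields a Schwartz low-$\eta$ piece. For $|\eta|\ge 1$, the residue part is bounded by $\exp(-c|\eta|^{2/(1+\alpha_0)})$. The cut part is handled by the same law-of-cosines coercivity $|D|\gtrsim r+|\eta|^2 r^{-\alpha_0}$, which requires the phase $(1+\alpha_0)\phi_c$ to lie in $(\pi,2\pi)$. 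This is possible because $\alpha_0<1$ and (H1) keeps the roots off the negative axis with a margin $\inf\theta>0$. The derivative bounds $|\nabla^\beta_\eta D^{-1}|\lesssim |D|^{-1}|\eta|^{-|\beta|}$ carry over unchanged. When $\alpha_0=0$ the factor $r^{-\alpha_0}$ is simply constant and the integrals only simplify.

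On the unbounded part $|\mu|>R$, the quantity $|\mu/t|$ ranges over all scales, and (H3) is no longer available. Here we use only the exponential factor $e^{r\cos\phi_c}$, together with a cruder lower bound on the denominator. Because $|\mu/t|$ is then bounded below by $R/t$, and for $|\mu/t|$ large we are in the regime of (H2), the quotient $H_0$ behaves like $|\mu/t|^{\alpha_0-\alpha_\infty}$. The same phase argument still gives $|D|\gtrsim r+|\eta|^2r^{-\alpha_0}|H_0|$. We integrate $e^{-r|\cos\phi_c|}$ over $r>R$ to show that this tail is a symbol in $S^{-2}_{1,0}$, with seminorms uniform in $t\ge1$ (possibly even exponentially small in $R$). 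With both pieces in $S^{-2}_{1,0}$, Mikhlin--H\"ormander plus the Sobolev embedding $W^{2,m}\hookrightarrow L^p$ under \eqref{eq:dimensional_restriction_asymptotic} handles $m>1$. For $m=1$ we use the kernel bounds of Remark \ref{rem:kernel_singularity} and Young's inequality, exactly as before.

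The main obstacle is the intermediate regime on the contour where $|\mu/t|$ is neither small nor large. There (H3) and (H2) give no power law, so $H_0$ is controlled only by continuity and nonvanishing of $\hat g$ on the compact set $\{R/t\le|\lambda|\le K\}$. The difficulty is to keep the coercivity constant $\delta_0$ uniform there. I would first try to choose the contour phase $\phi_c$ using the continuous aperture function $\theta(|\lambda|)$ of (H1), with $\inf\theta>0$. This should keep the relative phase of $\mu$ and $|\eta|^2\mu^{-\alpha_0}H_0$ bounded away from $\pi$ on the whole contour. Failing that, I would absorb this compact region into a fixed bounded operator and use $t\ge1$ only to pass it through the scaling.
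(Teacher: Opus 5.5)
Your proposal is viable and takes a genuinely different route from the paper. You make one global rescaling $\mu=\lambda t$, $\eta=\xi t^{(1+\alpha_0)/2}$ and reduce everything to a $t$-uniform $S^{-2}_{1,0}$ bound for $\widetilde\Psi_0(t,\cdot)$. After that, Mikhlin--H\"ormander with $W^{2,m}\hookrightarrow L^p$ (or an $L^1$ kernel bound and Young's inequality when $m=1$), together with the dilation identity of Lemma \ref{lem:smoothing}, finishes the proof.

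The paper never rescales globally. Instead it localizes in $\xi$:
\begin{itemize}
\item it bounds the block $|\xi|\ge1$ crudely by $t^{-(1+\alpha_0)}$ and absorbs it via $\beta_{m,p}<1+\alpha_0$;
\item it treats each annulus $|\xi|\sim2^{-j}$ with a Hankel contour fitted to the pole scale $2^{-j\gamma_0}$, and gains $2^{-jN(1/m-1/p)}$ from Bernstein;
\item it sums around $j_0(t)$, where $2^{-j_0}\sim t^{-(1+\alpha_0)/2}$;
\item it needs a separate ultra-low-frequency piece with an $L^1$-normalized rescaled kernel for $m=p$.
\end{itemize}
The paper's localization lets it fit a contour to one pole scale at a time and pay for the high block with a crude bound. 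The cost is a dyadic summation and a separate endpoint argument.

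Your route needs a single contour family with $t$-uniform coercivity over all scales. In exchange, $m=p$ (including $m=1$) costs nothing extra, and the algebraic tail is built into the symbol class. In the paper's blocks with $t2^{-j\gamma_0}\gg1$, the symbol decays only algebraically, not like $e^{-ct2^{-j\gamma_0}}$; for the pure fractional kernel it behaves like $(t2^{-j\gamma_0})^{-(1+\alpha_0)}$, since $E_{1+\alpha}(-z)\sim z^{-1}/\Gamma(-\alpha)$. It is \eqref{eq:dimensional_restriction_asymptotic} that makes that block sum dominated by $j=j_0$. In your setup this role of the restriction is transparent.

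The crossover obstacle you flag is tractable. One has $D=t\bigl(\lambda+|\xi|^2\hat g(\lambda)\bigr)$ and $r+|\eta|^2r^{-\alpha_0}|H_0(t,\mu)|=t\bigl(|\lambda|+|\xi|^2|\hat g(\lambda)|\bigr)$. So coercivity is a statement about a fixed contour in the $\lambda$-plane, and any margin obtained there is automatically uniform in $t$.

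Two corrections are needed.

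First, the condition $(1+\alpha_0)\phi_c\in(\pi,2\pi)$ cannot hold when $\alpha_0=0$, because $\phi_c<\pi$. Your remark that this case ``only simplifies'' is therefore wrong as stated. The law of cosines only needs the relative phase $\arg(\lambda/\hat g(\lambda))$ to stay away from $\pi$. For Cole--Cole, the low-frequency roots tend to the negative axis while the high-frequency ones sit at angle $\pi/(1+\alpha_\infty)$. Hence every ray with $\phi_c\in(\pi/(1+\alpha_\infty),\pi)$ is crossed by the root locus, as in Lemma \ref{lem:smoothing}. The fix is a ray below all pole angles, $\phi_c\in(\pi/2,\pi/(1+\alpha_\infty))$. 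On it, $\arg(\lambda^{1+\alpha}+\gamma\lambda)\in[\phi_c,(1+\alpha)\phi_c]$, no poles are enclosed, and no residue term appears.

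Second, your bounded piece invokes (H3) only for $t$ large, while the lemma claims uniformity for all $t\ge1$. Either take $R$ below the (H3) threshold, since the unbounded piece never needed $R$ large. Or cover $t\in[1,T_1]$ with Lemma \ref{lem:smoothing}, whose bound $Ct^{-\gamma_{m,p}}$ is at most $C't^{-\beta_{m,p}}$ there.
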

	\begin{proof}
		For $t \ge 1$, a smooth dyadic partition of unity $1 = \psi_0(\xi) + \sum_{j=1}^\infty \psi_j(\xi)$ isolates the active frequency scales. The support of $\psi_0 \in C^\infty(\mathbb{R}^N)$ covers the high-frequency range $|\xi| \ge 1$, whereas each $\psi_j(\xi) = \psi(2^j \xi)$ isolates the low-frequency dyadic annuli $|\xi| \sim 2^{-j}$ for $j \ge 1$. The resolvent operator decomposes accordingly into $S(t) = S_0(t) + \sum_{j=1}^\infty S_j(t)$.
		
		\noindent\textbf{Step 1: High-frequency branch ($S_0$).} 
		For $|\xi| \ge 1$, the roots $\lambda_k(\xi)$ of $\lambda + |\xi|^2 \hat{g}(\lambda) = 0$ are bounded away from the origin and confined within the analytic sector. By hypothesis (H1), their real parts satisfy $\mathrm{Re}(\lambda_k) \le -c_0 |\xi|^{\frac{2}{1+\alpha_\infty}} \le -c_0 < 0$. Deforming the Bromwich contour to the boundary rays of $\Sigma_{\theta_\infty}$, the local contribution near the origin, controlled via (H3), is of order $\mathcal{O}(t^{-(1+\alpha_0)} |\xi|^{-2})$, while the outer integral yields an exponentially decaying tail $\mathcal{O}(e^{-\tilde{c} t})$. Applying the Mikhlin-H\"ormander theorem for $m > 1$, or a refined Bessel-potential decay estimate for the endpoint $m=1$, produces
		\begin{equation*}
			\|S_0(t)u_0\|_{L^p} \le C t^{-(1+\alpha_0)} \|u_0\|_{L^m}, \quad \forall t \ge 1.
		\end{equation*}
		Due to the dimensional restriction \eqref{eq:dimensional_restriction_asymptotic}, the exponent satisfies $\beta_{m,p} < 1+\alpha_0$. Since the evaluation occurs in the long-time regime $t \ge 1$, the faster decay is absorbed by the slower one, implying $t^{-(1+\alpha_0)} \le t^{-\beta_{m,p}}$ uniformly. This structural bound ensures that the high-frequency branch remains subordinate to the target decay rate.
		
		\noindent\textbf{Step 2: Low-frequency dyadic multipliers and adaptive contours ($S_j$).} 
		For the low-frequency components $j \ge 1$, the spatial frequencies are localized on the annuli $|\xi| \sim 2^{-j}$. Hypothesis (H3) dictates that the dominant poles migrate towards the origin according to the scaling law $\lambda_j \sim -|\xi|^{\gamma_0} \sim -2^{-j \gamma_0}$, where the characteristic exponent is defined as $\gamma_0 := \frac{2}{1+\alpha_0}$. 
		
		To prevent the singularity distance from collapsing and to ensure uniform coercivity, a family of adaptive Hankel contours $\Gamma_j$ is constructed for each dyadic block. The vertex of $\Gamma_j$ on the real axis scales with $2^{-j \gamma_0}$, ensuring that the distance $d_j$ from the contour to the moving pole scales proportionally to the magnitude of the pole ($d_j \sim |\lambda_j|$). 
		
		Defining the localized variables $\mu = 2^{j \gamma_0} \lambda$ and $\eta = 2^j \xi$ maps the domain to a fixed annulus $|\eta| \sim 1$, where the singularity is situated at $\mathcal{O}(1)$ relative to the rescaled contour. The derivatives of the localized symbol $\widehat{S}_j(t,\xi) = \psi_j(\xi)\widetilde{\Psi}(t,\xi)$ respect the scaling of the dyadic projection. Specifically, each spatial derivative $\nabla_\xi$ extracts a scaling factor of $2^j$. Applying Fa\`a di Bruno's formula to the denominator $D(\mu, \eta)$ generates combined spatial terms of the form $2^{jk} \nabla_\eta^k (D^{-1})$. As the distance to the moving poles scales as $d_j \sim 2^{-j \gamma_0}$, the singular algebraic growth of the inverted powers $1/D^{1+k}$ is neutralized by the geometric decay of the dyadic multipliers evaluated along the scaled contour, yielding the uniform H\"ormander condition
		\begin{equation*}
			\sup_{j \ge 1} \sup_{\xi \in \operatorname{supp}(\psi_j)} |\xi|^k |\nabla_\xi^k \widehat{S}_j(t,\xi)| \le C_k e^{-c t 2^{-j \gamma_0}},
		\end{equation*}
		where the constants $C_k$ and $c > 0$ are independent of the dyadic index $j \ge 1$ and time $t \ge 1$.
		
		\noindent\textbf{Step 3: Besov embedding and dyadic summation.} 
		By classical singular integral theory (see Stein \cite{Stein1970}), the uniform H\"ormander multiplier bounds imply that the underlying convolution kernels possess rapid spatial decay and belong uniformly to $L^1(\mathbb{R}^N)$. Combining Young's convolution inequality with standard Bernstein estimates for functions localized in frequency annuli of radius $R \sim 2^{-j}$, the localized operators satisfy
		\begin{equation}\label{eq:bernstein_applied}
			\|S_j(t)u_0\|_{L^p} \le C 2^{-j N \left(\frac{1}{m} - \frac{1}{p}\right)} e^{-c t 2^{-j \gamma_0}} \|\Delta_j u_0\|_{L^m},
		\end{equation}
		where $\Delta_j$ denotes the standard Littlewood-Paley projection (cf. Grafakos \cite[Chapter 6]{Grafakos2009}). Utilizing the canonical continuous embedding $L^m(\mathbb{R}^N) \hookrightarrow \dot{B}^0_{m, \infty}(\mathbb{R}^N)$, the block norm is bounded uniformly by $\|\Delta_j u_0\|_{L^m} \le C \|u_0\|_{L^m}$.
		
		To perform the summation over $j \ge 1$ for $t \ge 1$, the critical time-dependent transition index $j_0(t) \in \mathbb{N}$ is introduced, defined such that $t 2^{-j_0(t) \gamma_0} \sim 1$, which implies $2^{-j_0(t)} \sim t^{-1/\gamma_0} = t^{-\frac{1+\alpha_0}{2}}$. The analysis is then separated into the strict smoothing regime ($m < p$) and the uniform stability configuration ($m = p \ge 1$).
		
		\noindent\textit{Case 1: Strict smoothing ($m < p$).} The summation is partitioned into two sub-regimes
		\begin{equation*}
			\sum_{j=1}^\infty \|S_j(t)u_0\|_{L^p} = \sum_{j=1}^{j_0(t)} \|S_j(t)u_0\|_{L^p} + \sum_{j=j_0(t)+1}^\infty \|S_j(t)u_0\|_{L^p} := I_1 + I_2.
		\end{equation*}
		
		For $I_1$ (damped frequency scales), $j \le j_0(t)$ implies $t 2^{-j \gamma_0} \ge 1$. Given that the exponential damping dominates the algebraic growth for decreasing values of $j$, the sum is controlled by its largest term at the cutoff boundary $j = j_0(t)$:
		\begin{equation*}
			I_1 \le C \|u_0\|_{L^m} \sum_{j=1}^{j_0(t)} e^{-c t 2^{-j \gamma_0}} 2^{-j N \left(\frac{1}{m} - \frac{1}{p}\right)} \le C 2^{-j_0(t) N \left(\frac{1}{m} - \frac{1}{p}\right)} \|u_0\|_{L^m} \le C t^{-\beta_{m,p}} \|u_0\|_{L^m}.
		\end{equation*}
		
		For $I_2$ (ultra-low frequency scales), $j > j_0(t)$ implies $t 2^{-j \gamma_0} < 1$, rendering the exponential term of order $\mathcal{O}(1)$. As the common ratio satisfies $2^{-N(1/m - 1/p)} < 1$, the expression transforms into a convergent geometric series bounded by its first term $j = j_0(t)+1$:
		\begin{equation*}
			I_2 \le C \|u_0\|_{L^m} \sum_{j=j_0(t)+1}^\infty 2^{-j N \left(\frac{1}{m} - \frac{1}{p}\right)} \le C 2^{-j_0(t) N \left(\frac{1}{m} - \frac{1}{p}\right)} \|u_0\|_{L^m} \le C t^{-\beta_{m,p}} \|u_0\|_{L^m}.
		\end{equation*}
		
		\noindent\textit{Case 2: Uniform stability ($m = p \ge 1$).} For $m=p$, the target exponent yields $\beta_{m,m} = 0$, and the classical Mikhlin-H\"ormander theorem cannot be applied at the endpoint $m=1$ due to the failure of $L^1 \to L^1$ boundedness for Calder\'on-Zygmund singular integrals. To circumvent this limitation and secure uniform stability for all $m \ge 1$, the low-frequency resolvent is split at the index $j_0(t)$ as $S_{\mathrm{low}}(t) = \sum_{j=1}^{j_0(t)} S_j(t) + \widetilde{S}_{j_0(t)}(t)$, where $\widetilde{S}_{j_0(t)}(t)$ represents the aggregate low-pass remainder covering the ultra-low frequency ball $|\xi| \le 2^{-j_0(t)} \sim t^{-1/\gamma_0}$.
		
		The sum of the dyadically damped terms is bounded directly via \eqref{eq:bernstein_applied} with $m=p$:
		\begin{equation*}
			\sum_{j=1}^{j_0(t)} \|S_j(t)u_0\|_{L^m} \le C \sum_{j=1}^{j_0(t)} e^{-c t 2^{-j \gamma_0}} \|\Delta_j u_0\|_{L^m} \le C \|u_0\|_{L^m} \sum_{j=1}^{j_0(t)} e^{-c 2^{(j_0(t)-j)\gamma_0}} \le C \|u_0\|_{L^m},
		\end{equation*}
		where convergence is guaranteed by the geometric decay of the exponents in terms of $(j_0(t)-j)$, noting that $\sup_j \|\Delta_j u_0\|_{L^m} \le C \|u_0\|_{L^m}$ uniformly for all $m \ge 1$.
		
		For the ultra-low pass remainder $\widetilde{S}_{j_0(t)}(t)$, let $\chi_0 \in C_0^\infty(\mathbb{R}^N)$ denote the smooth low-pass filter associated with this terminal dyadic sum, such that $\chi_0(\xi) = 1$ for $|\xi| \le 1$ and $\chi_0(\xi) = 0$ for $|\xi| \ge 2$. Its spatial multiplier is given by $\chi_0(2^{j_0(t)}\xi)\widehat{S}(t,\xi)$. Applying the exact temporal similarity scaling $\eta = \xi t^{1/\gamma_0}$ transforms the dynamic cutoff into the static cutoff $\chi_0(\eta)$, anchoring the multiplier to a fixed, $t$-independent compact domain $\operatorname{supp}(\chi_0) \subset \{|\eta| \le 2\}$. Under this scaling, the underlying symbol is formally defined as $\widetilde{S}_t(\eta) := \widehat{S}(t, \eta t^{-1/\gamma_0})$. The inverse Fourier transform yields the rescaled convolution kernel
		\begin{equation*}
			K_{\mathrm{low}}(t,x) = \frac{1}{(2\pi)^N} \int_{\mathbb{R}^N} e^{ix \cdot \xi} \chi_0(\xi t^{1/\gamma_0}) \widehat{S}(t,\xi) d\xi = t^{-N/\gamma_0} \widetilde{K}_t(x t^{-1/\gamma_0}),
		\end{equation*}
		where the rescaled base profile is $\widetilde{K}_t = \mathcal{F}^{-1}(\chi_0(\eta) \widetilde{S}_t(\eta))$. Given that the roots of the characteristic equation under this low-frequency scaling satisfy a uniform coercivity bound analogous to Step 2 of Lemma \ref{lem:smoothing}, the localized symbol $\chi_0(\eta) \widetilde{S}_t(\eta)$ is uniformly smooth with respect to $\eta$. Consequently, its inverse transform $\widetilde{K}_t$ belongs uniformly to the Schwartz class $\mathcal{S}(\mathbb{R}^N)$ for all $t \ge 1$. A direct change of variables $y = x t^{-1/\gamma_0}$ in the spatial integration confirms the exact $L^1(\mathbb{R}^N)$ isometric invariance
		\begin{equation*}
			\|K_{\mathrm{low}}(t,\cdot)\|_{L^1(\mathbb{R}^N)} = \int_{\mathbb{R}^N} t^{-N/\gamma_0} |\widetilde{K}_t(x t^{-1/\gamma_0})| dx = \int_{\mathbb{R}^N} |\widetilde{K}_t(y)| dy \le C,
		\end{equation*}
		uniformly for all $t \ge 1$. Applying Young's convolution inequality bypasses the Calder\'on-Zygmund singularity, yielding
		\begin{equation*}
			\|\widetilde{S}_{j_0(t)}(t)u_0\|_{L^m} \le \|K_{\mathrm{low}}(t,\cdot)\|_{L^1} \|u_0\|_{L^m} \le C \|u_0\|_{L^m}, \quad \forall m \ge 1.
		\end{equation*}
		
		Combining the independent estimates for $S_0(t)$ and $S_{\mathrm{low}}(t)$ establishes the target bound and concludes the proof.
	\end{proof}
	
	\begin{remark}[Temporal H\"older continuity]\label{rem:holder_continuity}
		As a consequence of the multiplier estimates in Lemma \ref{lem:smoothing} and Lemma \ref{lem:smoothing_asymptotic}, the resolvent operator $S(t)$ exhibits local H\"older continuity in time for $t>0$. Taking the temporal derivative of the Bromwich integral for $t>0$ requires subtracting the identity to ensure absolute convergence along the contour, which introduces the multiplier $\lambda$. Upon scaling, this extracts a factor of $t^{-1}$, yielding the derivative bound
		\begin{equation*}
			\|\partial_t S(t)\|_{L^r \to L^p} \le C t^{-1-\gamma_{r,p}}.
		\end{equation*}
		By the Fundamental Theorem of Calculus, this provides the Lipschitz bound 
		\[\|S(t+h) - S(t)\|_{L^r \to L^p} \le C h t^{-1-\gamma_{r,p}}\] 
		for any increment $h>0$. Interpolating this bound with the standard triangle inequality estimate $\|S(t+h) - S(t)\|_{L^r \to L^p} \le 2C t^{-\gamma_{r,p}}$ via a parameter $\theta \in (0,1)$ directly yields the H\"older estimate
		\begin{equation*}
			\|S(t+h) - S(t)\|_{L^r \to L^p} \le C h^\theta t^{-(\gamma_{r,p} + \theta)}, \quad \forall t, h > 0.
		\end{equation*}
		Setting $\tilde{\gamma} = \gamma_{r,p} + \theta$ establishes the uniform control required for the strong temporal continuity of the nonlinear Volterra integrations (analogous optimal regularity results for analytic semigroups are detailed in Lunardi \cite{Lunardi1995}).
	\end{remark}
	
	\section{Local well-posedness and the critical threshold}
	Having established the linear smoothing estimates under the topological constraint \eqref{eq:dimensional_restriction}, the nonlinear Cauchy problem \eqref{eq:main} is addressed via a fixed-point argument. Because the exponent $\gamma_{q,p} = \frac{N(1+\alpha_\infty)}{2}\left(\frac{1}{q} - \frac{1}{p}\right)$ induces a temporal singularity near the origin, solutions must be constructed within a time-weighted Banach space.
	
	\subsection{Local well-posedness in the critical and subcritical regimes}
	For $T > 0$ and $1 < q \le p < \infty$, the resolution space is defined as
	\begin{equation*}
		E_T = \left\{ u \in C([0,T]; L^q(\mathbb{R}^N)) : \sup_{0 < t \le T} t^{\gamma_{q,p}} \|u(t)\|_{L^p} < \infty, \text{ and } \lim_{t \to 0^+} t^{\gamma_{q,p}} \|u(t)\|_{L^p} = 0 \right\},
	\end{equation*}
	equipped with the norm
	\begin{equation*}
		\|u\|_{E_T} = \sup_{0 \le t \le T} \|u(t)\|_{L^q} + \sup_{0 < t \le T} t^{\gamma_{q,p}} \|u(t)\|_{L^p}.
	\end{equation*}
	The closed subspace defined by the vanishing condition as $t \to 0^+$ ensures that the linear evolution $S(t)u_0$ belongs to $E_T$ for any $u_0 \in L^q(\mathbb{R}^N)$. 
	
	Mild solutions are constructed as fixed points of the integral operator
	\begin{equation*}
		\mathcal{T}(u)(t) = S(t)u_0 + \int_0^t S(t-s)u(s)|u(s)|^{\rho-1} ds, \quad t\ge0.
	\end{equation*}
	Applying the linear smoothing estimates to the nonlinear Volterra term yields a fractional integration problem, whose convergence imposes an algebraic balance among the memory exponent $\alpha_\infty$, the spatial dimension $N$, and the reaction power $\rho$, ultimately dictating the critical regularity threshold $q_c$.
	
	\begin{theorem}\label{thm:local_existence}
		Let $g$ satisfy (H1)-(H2). Assume the initial datum $u_0 \in L^q(\mathbb{R}^N)$ and the nonlinearity exponent $\rho > 1$. Let the Lebesgue index satisfy
		\begin{equation*}
			q \ge q_c := \frac{N(\rho-1)(1+\alpha_\infty)}{2}.
		\end{equation*}
		If $q > q_c$ (subcritical regime), there exists a maximal time $T_{\max} > 0$, depending exclusively on $\|u_0\|_{L^q}$, such that \eqref{eq:main} admits a unique mild solution $u \in E_{T_{\max}}$. If $q = q_c$ (critical regime), the same holds provided that $\|u_0\|_{L^{q_c}}$ is sufficiently small. Furthermore, in both cases, the data-to-solution map $u_0 \mapsto u$ is locally Lipschitz continuous from $L^q(\mathbb{R}^N)$ into $E_{T_{\max}}$, yielding local Hadamard well-posedness.
	\end{theorem}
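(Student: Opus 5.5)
We construct the solution as a fixed point of $\mathcal{T}$ on a closed ball of $E_T$, following the Weissler scheme and using Lemma \ref{lem:smoothing} as the only linear input. The first step is to choose the auxiliary exponent $p$. We take $p \in (\max(q,\rho), q\rho)$ so that $p/\rho \ge 1$ and $p/\rho < q$. We also require two admissibility conditions for Lemma \ref{lem:smoothing}. The pair $(q,p)$ must satisfy $\frac1q-\frac1p<\frac2N$. The pair $(p/\rho, p)$ must satisfy
\[
\frac{\rho-1}{p}<\frac2N ,
\]
and similarly the pair $(p/\rho,q)$ with $\frac{\rho}{p}-\frac1q<\frac2N$. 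Since $q\ge q_c$ gives $\frac{\rho-1}{q}\le \frac{2}{N(1+\alpha_\infty)}<\frac2N$, all of these hold when $p$ is chosen close enough to $q$.

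We also need integrability at $s=0$. This amounts to requiring $\rho\gamma_{q,p}<1$ together with $\gamma_{p/\rho,p}<1$ (and $\gamma_{p/\rho,q}<1$), which again holds for $p$ near $q$.

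The nonlinear estimate then comes from Lemma \ref{lem:smoothing} and Hölder's inequality, using $\|u|u|^{\rho-1}\|_{L^{p/\rho}}=\|u\|_{L^p}^\rho$:
\[
t^{\gamma_{q,p}}\Big\|\int_0^t S(t-s)u|u|^{\rho-1}ds\Big\|_{L^p}\le C t^{\gamma_{q,p}}\int_0^t (t-s)^{-\gamma_{p/\rho,p}}s^{-\rho\gamma_{q,p}}ds\,\sup_s\big(s^{\gamma_{q,p}}\|u(s)\|_{L^p}\big)^\rho .
\]
The Beta integral gives the factor $t^{\,1-\gamma_{p/\rho,p}-(\rho-1)\gamma_{q,p}}$. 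A short computation gives
\[
\gamma_{p/\rho,p}+(\rho-1)\gamma_{q,p}=\frac{N(1+\alpha_\infty)(\rho-1)}{2q}=\frac{q_c}{q}.
\]
The $L^q$ component is handled the same way, through the pair $(p/\rho,q)$, and yields the same power $T^{1-q_c/q}$. Differences $u|u|^{\rho-1}-v|v|^{\rho-1}$ are treated via $|a|a|^{\rho-1}-b|b|^{\rho-1}|\le C(|a|^{\rho-1}+|b|^{\rho-1})|a-b|$ and Hölder's inequality, which produce the Lipschitz factor $C T^{1-q_c/q}R^{\rho-1}$.

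In the subcritical case $q>q_c$ this power is positive. Taking $R=2C\|u_0\|_{L^q}$ and $T$ small in terms of $R$ makes $\mathcal{T}$ a contraction on the ball of radius $R$ in $E_T$. In the critical case the power is zero, so we instead work with the quantity
\[
\delta(T)=\sup_{0<t\le T}t^{\gamma_{q,p}}\|S(t)u_0\|_{L^p}.
\]
This quantity tends to $0$ as $T\to0$, by a density argument: for smooth data one uses the pair $(p,p)$, and the uniform bound extends to all of $L^q$. Smallness of $\delta$, which is guaranteed by a small $\|u_0\|_{L^{q_c}}$, closes the contraction in the $p$-weighted part, and the $L^q$ part then follows a posteriori.

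Time continuity into $L^q$ and the vanishing condition $t^{\gamma_{q,p}}\|u(t)\|_{L^p}\to0$ for the Duhamel term come from Remark \ref{rem:holder_continuity} and from the factor $\sup_{s\le t}(\cdot)^\rho\to0$. Strong continuity of $S(t)$ at $t=0$ in $L^q$ follows from the symbol satisfying $\widetilde\Psi(t,\eta)\to1$ pointwise, together with uniform boundedness. Uniqueness in $E_T$ follows from the same difference estimate on short intervals. $T_{\max}$ is then obtained by the standard continuation argument, since in the subcritical case the existence time depends only on $\|u_0\|_{L^q}$. Local Lipschitz dependence follows from
\[
\|\mathcal{T}_{u_0}u-\mathcal{T}_{v_0}v\|_{E_T}\le C\|u_0-v_0\|_{L^q}+\tfrac12\|u-v\|_{E_T},
\]
which holds on a common time interval for nearby data.

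The main obstacle is the simultaneous choice of $p$. It must satisfy $\rho\gamma_{q,p}<1$, $\gamma_{p/\rho,p}<1$, $p/\rho\ge1$ (so that Lemma \ref{lem:smoothing} is available, including its $q=1$ endpoint), and the strict barrier $\frac{\rho-1}{p}<\frac2N$. When $q$ is close to $1$ these requirements could conflict. We would first check that $q\ge q_c$ implies $\frac{\rho-1}{q}<\frac2N$, so that $p$ slightly larger than $\max(q,\rho)$ satisfies every condition. If $\rho>q$ this fails, and we would then use a two-exponent space in which the $L^q$ estimate goes through an intermediate $L^{r}$ with $r\in[1,q]$.
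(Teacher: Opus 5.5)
Your proposal is correct and follows the paper's proof essentially step for step: contraction in $E_T$ with $r=p/\rho$, Beta integrals giving the factor $T^{1-q_c/q}$ in both the weighted $L^p$ and the $L^q$ components, small $T$ for $q>q_c$ and small data for $q=q_c$, the difference estimate for uniqueness, and the $\frac12$-Lipschitz bound for continuous dependence; your explicit window $p\in(\max(q,\rho),\rho q)$ is actually more careful than the paper's ``arbitrary $p\in(q_c,\rho q_c)$'', which can give $p/\rho<1$ when $q_c<\rho$. Your closing worry is unfounded, however: every $p$ in that window works, since $p<\rho q$ gives $\rho\gamma_{q,p}<q_c/q\le 1$, $p>q\ge q_c$ gives $\gamma_{p/\rho,q}<q_c/q\le 1$ and $\gamma_{p/\rho,p}=q_c/p<1$, and each barrier $\frac1a-\frac1b<\frac2N$ follows from $\gamma_{a,b}<1$ because $\frac{2}{N(1+\alpha_\infty)}<\frac2N$, so you need neither $p$ close to $q$ nor a two-exponent fallback when $\rho>q$.
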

	\begin{proof}
		The proof relies on the Banach Fixed Point Theorem applied to the integral operator
		\begin{equation*}
			\mathcal{T}(u)(t) = u_L(t) + \mathcal{N}(u)(t),
		\end{equation*}
		where $u_L(t) = S(t)u_0$ denotes the linear evolution and $\mathcal{N}(u)(t) = \int_0^t S(t-s)u(s)|u(s)|^{\rho-1} ds$ represents the nonlinear Volterra term. The analysis seeks a fixed point within the closed ball $B_R = \{ u \in E_T : \|u\|_{E_T} \le R \}$ for suitably chosen parameters $T > 0$ and $R > 0$.
		
		\noindent\textbf{Step 1: Index selection and integrability constraints.}
		Evaluating the nonlinear source in $L^r(\mathbb{R}^N)$ for $r = p/\rho$ yields $\|f(u)\|_{L^r} = \|u\|_{L^p}^\rho$. Applying the linear bound requires the temporal singularity exponents to satisfy
		\[ \gamma_{r,p} < 1, \quad \gamma_{r,q} < 1, \quad \text{and} \quad \rho \gamma_{q,p} < 1. \]
		Substituting $r = p/\rho$ into the decay rates produces
		\begin{equation*}
			\gamma_{r,p} = \frac{N(\rho-1)(1+\alpha_\infty)}{2p} = \frac{q_c}{p}.
		\end{equation*}
		Thus, $\gamma_{r,p} < 1$ requires $p > q_c$. The condition $\rho\gamma_{q,p} < 1$ imposes
		\begin{equation*}
			\rho\gamma_{q,p} = \rho \frac{q_c}{\rho-1} \left( \frac{1}{q} - \frac{1}{p} \right) < 1.
		\end{equation*}
		For the critical case $q = q_c$, this inequality demands $p < \rho q_c$, thereby fixing an arbitrary parameter $p \in (q_c, \rho q_c)$.
		
		For this choice of $p$, the condition $\gamma_{r,q} < 1$ holds, and the dimensional restriction \eqref{eq:dimensional_restriction} from Lemma \ref{lem:smoothing} is automatically satisfied since
		\begin{equation*}
			\frac{1}{r} - \frac{1}{p} = \frac{\rho-1}{p} < \frac{\rho-1}{q_c} = \frac{2}{N(1+\alpha_\infty)} < \frac{2}{N}.
		\end{equation*}
		This configuration ensures that both $1-\gamma_{r,p} > 0$ and $1-\rho\gamma_{q,p} > 0$, guaranteeing the convergence of the subsequent fractional integrals.
		
		\noindent\textbf{Step 2: Invariance of the resolution space ($\mathcal{T}(E_T) \subset E_T$).}
		For the linear evolution $u_L$, the strong continuity $u_L \in C([0,T]; L^q(\mathbb{R}^N))$ follows directly from the underlying resolvent family for $u_0 \in L^q(\mathbb{R}^N)$. The uniform weighted bound 
		\[ \sup_{0 < t \le T} t^{\gamma_{q,p}} \|S(t)u_0\|_{L^p} \le C_L \|u_0\|_{L^q} < \infty \] 
		is a direct consequence of Lemma \ref{lem:smoothing}. Verifying the vanishing condition at the origin employs a density argument. Let $\varepsilon > 0$. As $L^p(\mathbb{R}^N) \cap L^q(\mathbb{R}^N)$ is dense in $L^q(\mathbb{R}^N)$, there exists $\phi \in L^p(\mathbb{R}^N) \cap L^q(\mathbb{R}^N)$ such that $\|u_0 - \phi\|_{L^q} < \varepsilon$. By linearity and Lemma \ref{lem:smoothing}, it follows that
		\begin{equation*}
			t^{\gamma_{q,p}} \|S(t)u_0\|_{L^p} \le C_L \|u_0 - \phi\|_{L^q} + C t^{\gamma_{q,p}} \|\phi\|_{L^p} \le C_L \varepsilon + C t^{\gamma_{q,p}} \|\phi\|_{L^p}.
		\end{equation*}
		Given that the operator acts boundedly from $L^p$ to $L^p$, taking the limit superior as $t \to 0^+$ forces the second term to vanish, yielding
		\[ \limsup_{t \to 0^+} t^{\gamma_{q,p}} \|S(t)u_0\|_{L^p} \le C_L \varepsilon. \] 
		Since $\varepsilon > 0$ is arbitrary, the initial trace is verified, confirming $u_L \in E_T$.
		
		For the nonlinear component $\mathcal{N}(u)$, let $u \in E_T$. Applying the $L^r \to L^p$ linear smoothing estimate to the Volterra integral yields
		\begin{align*}
			\|\mathcal{N}(u)(t)\|_{L^p} &\le \int_0^t \|S(t-s) f(u(s))\|_{L^p} ds \\
			&\le C \int_0^t (t-s)^{-\gamma_{r,p}} \|f(u(s))\|_{L^r} ds\\
			&\le C \int_0^t (t-s)^{-\gamma_{r,p}} s^{-\rho\gamma_{q,p}} \left( s^{\gamma_{q,p}} \|u(s)\|_{L^p} \right)^\rho ds\\
			&\le C \int_0^t (t-s)^{-\gamma_{r,p}} s^{-\rho\gamma_{q,p}} ds \cdot \|u\|_{E_T}^\rho\\
			&= C t^{1 - \gamma_{r,p} - \rho\gamma_{q,p}} \int_0^1 (1-\tau)^{-\gamma_{r,p}} \tau^{-\rho\gamma_{q,p}} d\tau \cdot \|u\|_{E_T}^\rho.
		\end{align*}
		Multiplying by the temporal weight $t^{\gamma_{q,p}}$ consolidates the temporal exponent, providing the uniform bound
		\[ \sup_{0 < t \le T} t^{\gamma_{q,p}} \|\mathcal{N}(u)(t)\|_{L^p} \le C T^{1 - \gamma_{r,p} - (\rho-1)\gamma_{q,p}} \mathcal{B}(1-\gamma_{r,p}, 1-\rho\gamma_{q,p})\|u\|_{E_T}^\rho. \]
		Using the definition of the critical exponent $q_c$, it follows that $(\rho-1)\gamma_{q,p} = \frac{q_c}{q} - \frac{q_c}{p}$. Since $\gamma_{r,p} = \frac{q_c}{p}$, the full exponent simplifies structurally to
		\begin{equation*}
			1 - \gamma_{r,p} - (\rho-1)\gamma_{q,p} = 1 - \frac{q_c}{p} - \left( \frac{q_c}{q} - \frac{q_c}{p} \right) = 1 - \frac{q_c}{q}.
		\end{equation*}
		Substituting this relation recovers the uniform limit
		\begin{equation*}
			\sup_{0 < t \le T} t^{\gamma_{q,p}} \|\mathcal{N}(u)(t)\|_{L^p} \le C T^{1 - \frac{q_c}{q}} \mathcal{B}(1-\gamma_{r,p}, 1-\rho\gamma_{q,p}) \|u\|_{E_T}^\rho < \infty.
		\end{equation*}
		
		To verify the vanishing condition $\lim_{t \to 0^+} t^{\gamma_{q,p}} \|\mathcal{N}(u)(t)\|_{L^p} = 0$, in the subcritical regime ($q > q_c$), the decay $t^{1 - q_c/q} \to 0$ directly guarantees the limit. In the critical regime ($q = q_c$), where the temporal factor disappears ($1 - q_c/q = 0$), the non-decreasing continuous control function $K(t) = \sup_{0 < s \le t} s^{\gamma_{q,p}} \|u(s)\|_{L^p}$ is introduced. Since $u \in E_T$, it holds that $\lim_{t \to 0^+} K(t) = 0$. Utilizing the monotonicity $K(s) \le K(t)$ for $s \in (0,t]$, the integral estimate is refined to
		\begin{align*}
			t^{\gamma_{q,p}} \|\mathcal{N}(u)(t)\|_{L^p} &\le C t^{\gamma_{q,p}} \int_0^t (t-s)^{-\gamma_{r,p}} s^{-\rho\gamma_{q,p}} \left( s^{\gamma_{q,p}} \|u(s)\|_{L^p} \right)^\rho ds \\
			&\le C K(t)^\rho t^{\gamma_{q,p}} \int_0^t (t-s)^{-\gamma_{r,p}} s^{-\rho\gamma_{q,p}} ds\\
			&\le C \mathcal{B}(1-\gamma_{r,p}, 1-\rho\gamma_{q,p}) K(t)^\rho.
		\end{align*}
		As $t \to 0^+$, the right-hand side is governed by $K(t) \to 0$, thereby securing the required initial trace. Finally, the continuity $\mathcal{N}(u) \in C([0,T]; L^q(\mathbb{R}^N))$ follows from the strong continuity of the resolvent family $S(t)$, the integrability of the temporal kernel, and the Dominated Convergence Theorem. Thus, $\mathcal{T}$ maps $E_T$ into itself.
		
		\noindent\textbf{Step 3: Nonlinear estimates in $L^q$ and the contraction arguments.}
		To close the norm in $E_T$, the $L^q$ component is evaluated analogously. Employing the $L^r \to L^q$ linear bound yields
		\begin{align*}
			\|\mathcal{N}(u)(t)\|_{L^q} &\le C \int_0^t (t-s)^{-\gamma_{r,q}} \|u(s)\|_{L^p}^\rho ds \\
			&\le C \int_0^t (t-s)^{-\gamma_{r,q}} s^{-\rho\gamma_{q,p}} ds \cdot \|u\|_{E_T}^\rho \\
			&\le C t^{1 - \gamma_{r,q} - \rho\gamma_{q,p}} \mathcal{B}(1-\gamma_{r,q}, 1-\rho\gamma_{q,p}) \|u\|_{E_T}^\rho.
		\end{align*}
		A direct computation confirms $1 - \gamma_{r,q} - \rho\gamma_{q,p} = 1 - q_c/q$. Taking the supremum over $(0,T]$ produces
		\begin{equation*}
			\sup_{0 \le t \le T} \|\mathcal{N}(u)(t)\|_{L^q} \le C_q T^{1 - \frac{q_c}{q}} \|u\|_{E_T}^\rho.
		\end{equation*}
		Combining both components provides the bound $\|\mathcal{N}(u)\|_{E_T} \le C_N T^{1 - \frac{q_c}{q}} \|u\|_{E_T}^\rho$.
		
		The corresponding difference satisfies the Lipschitz estimate
		\begin{equation*}
			\|\mathcal{N}(u) - \mathcal{N}(v)\|_{E_T} \le 2\rho C_N T^{1 - \frac{q_c}{q}} R^{\rho-1} \|u - v\|_{E_T}.
		\end{equation*}
		For the linear evolution, let $\|u_L\|_{E_T} \le 2C_L \|u_0\|_{L^q} := \delta$. The operator $\mathcal{T}$ acts as a contraction on $B_R$ provided that
		\begin{equation*}
			\delta + C_N T^{1 - \frac{q_c}{q}} R^\rho \le R, \quad \text{and} \quad 2\rho C_N T^{1 - \frac{q_c}{q}} R^{\rho-1} \le \frac{1}{2}.
		\end{equation*}
		
		\noindent\textit{Case I: Subcritical regime ($q > q_c$).}
		Here, $1 - q_c/q > 0$. By fixing $R = 2\delta$, for any initial datum size $\|u_0\|_{L^q}$, there exists a sufficiently small $T_{\max} > 0$ satisfying both conditions, ensuring unconditional local existence.
		
		\noindent\textit{Case II: Critical regime ($q = q_c$).}
		Here, $1 - q_c/q = 0$. To enforce the contraction property independently of $T$, the radius is explicitly constrained to $R \le (4\rho C_N)^{-\frac{1}{\rho-1}}$. The first condition subsequently requires $\delta \le R/2$, which imposes the smallness bound
		\begin{equation*}
			\|u_0\|_{L^{q_c}} \le \frac{1}{4C_L} (4\rho C_N)^{-\frac{1}{\rho-1}}.
		\end{equation*}
		Under this specific constraint, $\mathcal{T}$ forms a strict contraction on $B_R$ for any $T > 0$.
		
		\noindent\textbf{Step 4: Uniqueness in the resolution space $E_T$.}
		To secure local Hadamard well-posedness within this functional framework, uniqueness must be established across the entire resolution space $E_T$. Let $u, v \in E_T$ be two mild solutions to \eqref{eq:main} arising from the same initial datum $u_0$. The non-decreasing control function for their difference is defined as
		\[M(t) = \sup_{0 < s \le t} s^{\gamma_{q,p}} \|u(s) - v(s)\|_{L^p},\]
		alongside the individual controls 
		\[K_w(t) = \sup_{0 < s \le t} s^{\gamma_{q,p}} \|w(s)\|_{L^p},\] 
		for $w \in \{u, v\}$. Given that $u, v \in E_T$, the vanishing condition implies $\lim_{t \to 0^+} K_u(t) = \lim_{t \to 0^+} K_v(t) = 0$.
		
		Evaluating the difference between their respective integral formulations yields
		\begin{equation*}
			u(t) - v(t) = \int_0^t S(t-s) [f(u(s)) - f(v(s))] ds.
		\end{equation*}
		Applying the $L^r \to L^p$ linear smoothing estimate produces
		\begin{align*}
			\|u(t) - v(t)\|_{L^p} &\le C \int_0^t (t-s)^{-\gamma_{r,p}} \|f(u(s)) - f(v(s))\|_{L^r} ds \\
			&\le C \int_0^t (t-s)^{-\gamma_{r,p}} \left( \|u(s)\|_{L^p}^{\rho-1} + \|v(s)\|_{L^p}^{\rho-1} \right) \|u(s) - v(s)\|_{L^p} ds.
		\end{align*}
		Multiplying by the temporal weight $t^{\gamma_{q,p}}$ and introducing the fractional partition $s^{-\rho\gamma_{q,p}} s^{\rho\gamma_{q,p}}$ within the integrand to reconstruct the weighted norm topologies yields
		\begin{align*}
			t^{\gamma_{q,p}} \|u(t) - v(t)\|_{L^p} &\le C t^{\gamma_{q,p}} \int_0^t (t-s)^{-\gamma_{r,p}} s^{-\rho\gamma_{q,p}} \left[ s^{\gamma_{q,p}(\rho-1)}\left( \|u(s)\|_{L^p}^{\rho-1} + \|v(s)\|_{L^p}^{\rho-1} \right) \right] \\
			&\quad \times \left( s^{\gamma_{q,p}} \|u(s) - v(s)\|_{L^p} \right) ds.
		\end{align*}
		Evaluating the temporal integral via the Beta function and taking the supremum over $(0, T^\ast]$ for some $T^\ast \le T$ ensures
		\begin{equation*}
			M(T^\ast) \le C \mathcal{B}(1-\gamma_{r,p}, 1-\rho\gamma_{q,p}) (T^\ast)^{1-\frac{q_c}{q}} \left( K_u(T^\ast)^{\rho-1} + K_v(T^\ast)^{\rho-1} \right) M(T^\ast).
		\end{equation*}
		As $\lim_{t \to 0^+} K_u(t) = \lim_{t \to 0^+} K_v(t) = 0$, the temporal coefficient on the right-hand side vanishes as $T^\ast \to 0^+$. This limit holds independently across both the subcritical and critical regimes. Hence, $T^\ast > 0$ can be chosen sufficiently small so that the prefactor is bounded by $1/2$, giving $M(T^\ast) \le \frac{1}{2} M(T^\ast)$. This implies $M(T^\ast) = 0$, proving $u \equiv v$ on $[0, T^\ast]$.
		
		To extend this local uniqueness to the maximal existence interval, let 
		\[T_\ast = \sup \{ \tau \in (0, T] : u \equiv v \text{ on } [0, \tau] \}.\]
		By definition, $T_\ast \ge T^\ast > 0$. Assuming, by contradiction, that $T_\ast < T$, the integral of the difference over the history segment $[0, T_\ast]$ vanishes identically. For $t > T_\ast$, the difference satisfies
		\begin{equation*}
			u(t) - v(t) = \int_{T_\ast}^t S(t-s) [f(u(s)) - f(v(s))] ds.
		\end{equation*}
		For $s \in [T_\ast, T]$, the temporal singularity at the origin is bypassed, and the continuous solutions remain bounded in $L^p(\mathbb{R}^N)$. Let $L = \sup_{s \in [T_\ast, T]} (\|u(s)\|_{L^p}^{\rho-1} + \|v(s)\|_{L^p}^{\rho-1}) < \infty$. Applying the linear smoothing estimates directly provides
		\begin{equation*}
			\|u(t) - v(t)\|_{L^p} \le C L \int_{T_\ast}^t (t-s)^{-\gamma_{r,p}} \|u(s) - v(s)\|_{L^p} ds.
		\end{equation*}
		Since $\gamma_{r,p} = q_c/p < 1$, the convolution kernel remains weakly singular. Invoking the generalized Gr\"onwall inequality (cf. Henry \cite{Henry1981}), it follows that $\|u(t) - v(t)\|_{L^p} = 0$ on an extended interval $[T_\ast, T_\ast + \delta]$ for some $\delta > 0$, contradicting the maximality of $T_\ast$. Therefore, $T_\ast = T$, establishing global uniqueness in $E_T$.
		
		\noindent\textbf{Step 5: Continuous dependence on initial data.}
		To complete the proof, the difference between two mild solutions $u$ and $v$ arising from initial data $u_0$ and $v_0$ is evaluated within the contraction ball $B_R$. Utilizing the Lipschitz constant bounded by $1/2$ from Step 3, the difference is estimated as
		\begin{align*}
			\|u - v\|_{E_T} &= \|\mathcal{T}(u) - \mathcal{T}(v)\|_{E_T} \\
			&\le \|S(t)(u_0 - v_0)\|_{E_T} + \|\mathcal{N}(u) - \mathcal{N}(v)\|_{E_T} \\
			&\le 2C_L \|u_0 - v_0\|_{L^q} + \frac{1}{2} \|u - v\|_{E_T}.
		\end{align*}
		This algebraically yields $\|u - v\|_{E_T} \le 4C_L \|u_0 - v_0\|_{L^q}$, establishing Lipschitz continuous dependence on the initial data within the weighted topology. This confirms local Hadamard well-posedness in the resolution space $E_T$ and concludes the proof.
	\end{proof}
	
	\subsection{Supercritical ill-posedness via norm inflation}
	To justify defining $q_c$ as the critical threshold, the Cauchy problem is shown to be ill-posed in the supercritical regime $1 < q < q_c$. Specifically, the data-to-solution map exhibits instantaneous norm inflation in the sense introduced by Christ, Colliander, and Tao \cite{Christ2003}. This mechanism is driven by the scaling mismatch between the memory kernel's asymptotic regularization and the nonlinear amplification.
	
	\begin{theorem}\label{thm:ill_posedness}
		Let $g$ satisfy (H1)-(H2) and let $1 < q < q_c$. The Cauchy problem \eqref{eq:main} is ill-posed at the origin in the following sense: for any $\epsilon > 0$, there exists a sequence of localized initial data $u_{0,k} \in L^q(\mathbb{R}^N)$ and a sequence of positive times $t_k \to 0^+$ such that 
		\begin{equation*}
			\|u_{0,k}\|_{L^q(\mathbb{R}^N)} < \epsilon, \quad \forall k \in \mathbb{N},
		\end{equation*}
		whereas the corresponding first Picard iterates satisfy the instantaneous norm inflation
		\begin{equation*}
			\lim_{k \to \infty} \|\mathcal{N}_1(u_{0,k})(t_k)\|_{L^q(\mathbb{R}^N)} = \infty.
		\end{equation*}
		Consequently, the data-to-solution map $u_0 \mapsto u$ fails to be uniformly continuous at the origin in the $L^q(\mathbb{R}^N)$ topology.
	\end{theorem}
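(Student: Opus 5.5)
We would argue by a scaling-and-concentration construction adapted to the first Picard iterate, which we interpret as $\mathcal{N}_1(u_0)(t) = \int_0^t S(t-s)f(S(s)u_0)\,ds$. Fix a nonnegative bump $\phi \in C_0^\infty(\mathbb{R}^N)$, $\phi \not\equiv 0$. We take concentrated, amplitude-normalized data
\[
u_{0,k}(x) = \epsilon_0 \lambda_k^{N/q}\phi(\lambda_k x),
\]
with $\lambda_k \to \infty$ and $\epsilon_0$ small, so that $\|u_{0,k}\|_{L^q} = \epsilon_0\|\phi\|_{L^q} < \epsilon$ for every $k$. The natural time scale of the data is $\tau_k = \lambda_k^{-2/(1+\alpha_\infty)}$, since this makes $\eta = \xi \tau_k^{(1+\alpha_\infty)/2}$ of order one on the frequency support of $u_{0,k}$. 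We set $t_k = \tau_k\to0^+$, or $t_k = c\,\tau_k$ with a fixed small $c$.

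\textbf{Step 1: the linear evolution on the data scale.} We use the scaling identity from Step 3 of Lemma \ref{lem:smoothing}. For $s = \sigma \tau_k$,
\[
S(s)u_{0,k}(x) = \epsilon_0\lambda_k^{N/q}\, V_k(\sigma,\lambda_k x),
\]
where $V_k(\sigma,\cdot)$ is the multiplier $\widetilde{\Psi}(s,\cdot)$, rescaled, applied to $\phi$. Since $H(t,\mu)$ is bounded above and below uniformly for small $t$ by (H2), $V_k(\sigma,\cdot)$ stays close to $\phi$ for $\sigma\in[0,\sigma_0]$, uniformly in $k$. We would obtain this closeness from continuity of $S$ at $t=0$ together with the uniform bounds behind Lemma \ref{lem:smoothing}. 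In particular,
\[
V_k(\sigma,y)\ge \tfrac12\phi(y)
\]
on a fixed ball $B$, for $\sigma\le\sigma_0$. The localization bounds of Remark \ref{rem:kernel_singularity} and the Peetre estimates announced for Appendix A supply the uniform spatial control needed here.

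\textbf{Step 2: lower bound for the Picard iterate.} We have
\[
f(S(s)u_{0,k}) = \epsilon_0^\rho \lambda_k^{N\rho/q} f(V_k(\sigma,\lambda_k x)).
\]
Integrating in time over $[0,t_k]$ and applying $S(t_k-s)$ with the same scaling, we get
\[
\mathcal{N}_1(u_{0,k})(t_k)(x) = \epsilon_0^\rho \lambda_k^{N\rho/q}\,\tau_k\, W_k(\lambda_k x),
\]
where
\[
W_k(y) = \int_0^{c}\bigl[\widetilde S_k(c-\sigma)\, f(V_k(\sigma,\cdot))\bigr](y)\,d\sigma .
\]
Taking $L^q$ norms gives
\[
\|\mathcal{N}_1(u_{0,k})(t_k)\|_{L^q} = \epsilon_0^\rho\,\lambda_k^{N\rho/q - N/q - 2/(1+\alpha_\infty)}\,\|W_k\|_{L^q}.
\]
The exponent equals $\frac{2}{1+\alpha_\infty}\bigl(\frac{q_c}{q}-1\bigr)$, which is positive exactly when $q<q_c$.

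It therefore remains to show $\|W_k\|_{L^q}\ge c_0>0$ uniformly in $k$. For small $c$, the operator $\widetilde S_k(c-\sigma)$ is close to the identity on the smooth, compactly supported profile $f(V_k)\ge 2^{-\rho}\phi^\rho$ on $B$. So $W_k \approx c\, f(\phi)$ up to a remainder of size $o(c)$ in $L^q$, which gives a positive lower bound for fixed small $c$. Letting $k\to\infty$ yields the inflation.

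\textbf{Step 3: failure of uniform continuity.} Along the sequence $u_{0,k}$, every term of the Picard expansion is small in $L^q$ except $\mathcal{N}_1$, which blows up. Combined with the zero solution, this contradicts any uniformly continuous data-to-solution map, in the Christ--Colliander--Tao sense.

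\textbf{Main obstacle.} The hard part is the \emph{uniform} near-identity statement used in Steps 1 and 2. The operator $\widetilde\Psi(t,\cdot)$ depends on $t$ through $H(t,\mu)$ and is not exactly scale invariant, and because of the heavy $S^{-2}_{1,0}$ tail the kernel is not positive. The sign of $W_k$ therefore cannot be read off pointwise. We would first try to get an $L^q$ estimate $\|\widetilde S(\sigma)\psi-\psi\|_{L^q}\le C\sigma^{\theta}\|\psi\|_{W^{2,q}}$, uniform in $k$. This would come from the time derivative bound of Remark \ref{rem:holder_continuity}, made uniform via (H2) and the coercivity of $D(\mu,\eta)$. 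It would give a positive $L^q$ lower bound without any positivity of the kernel.
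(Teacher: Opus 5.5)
Your scaling matches the paper's: data $A\phi(\lambda x)$, observation time of order $\lambda^{-2/(1+\alpha_\infty)}$, and the same gain $\lambda^{\delta}$ with $\delta=\frac{2}{1+\alpha_\infty}\bigl(\frac{q_c}{q}-1\bigr)$. Where you genuinely differ is in how you get the $k$-uniform lower bound on the rescaled profile ($W_k$ for you, $F_\lambda$ in the paper).

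\textbf{How the two routes compare.} The paper works at zero frequency. Since $\widehat S(t,0)=1$, it writes $\widehat{F_\lambda}(0)=\int_0^{\tau}\!\int f(K(\theta,\lambda)\phi)\,dy\,d\theta$. It keeps this $\ge c_0$ through the bound $\|K(\theta,\lambda)\phi-\phi\|_{L^\rho}\le C\theta^{2/\sigma}$, which comes from uniform $S^0_{1,0}$ control of $m_\lambda=(\widetilde\Psi-1)/|\eta|^2$. It then needs the $\lambda$-uniform envelope $|F_\lambda(y)|\le C_0(1+|y|)^{-(N+2)}$ of Appendix A (Peetre plus pointwise kernel bounds) to turn mass into an $L^q$ lower bound on a ball.

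You instead perturb around the explicit leading term: $\|W_k-c\,f(\phi)\|_{L^q}=o(c)$, with $\|c\,f(\phi)\|_{L^q}=c\|\phi\|_{L^{\rho q}}^{\rho}$. This uses only $L^r$ multiplier bounds with $1<r<\infty$: $r=q$ for the outer propagator, and $r=\rho q$ for the inner one together with H\"older on $f(V_k)-f(\phi)$. It needs no mass identity, no positivity and no pointwise kernel analysis. The price is that you must also show the outer propagator $\widetilde S_k(c-\sigma)$ is close to the identity, which the paper sidesteps via $\widehat S(t,0)=1$. In your route the pointwise bound $V_k\ge\frac12\phi$ on $B$ and the appeal to Appendix A are unnecessary. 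You should also compare $f(V_k)$ with $f(\phi)$ rather than call $f(V_k)$ smooth and compactly supported; it is neither, since $V_k$ has tails and may change sign.

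\textbf{What the key estimate requires.} Continuity of $S$ at $t=0$ gives no uniformity in $k$, because the data concentrate. Remark~\ref{rem:holder_continuity} only gives $\|\partial_t S(t)\|_{L^q\to L^q}\lesssim t^{-1}$, which is not integrable at $0$. What you need is the scale-covariant bound $\|S(t)\psi-\psi\|_{L^r}\le C\,t^{1+\alpha_\infty}\|\Delta\psi\|_{L^r}$ for $0<t\le T_0$ and $1<r<\infty$. This is exactly the content of the paper's uniform estimate on $m_\lambda$. Once it holds in unscaled time, exact dilation gives $\|\widetilde S_k(\sigma)\psi-\psi\|_{L^r}\le C\sigma^{1+\alpha_\infty}\|\Delta\psi\|_{L^r}$ with constants independent of $k$. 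So the lack of exact scale invariance you worry about is harmless.

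Apply this to $f(\phi)$. For instance, take $\phi(x)=e^{-1/(1-|x|^2)}$ on the unit ball so that $\phi^\rho$ is smooth. Alternatively, use density together with the uniform $L^q\to L^q$ bound of Lemma~\ref{lem:smoothing} with $p=q$. This yields $\|W_k-c\,f(\phi)\|_{L^q}\le C c^{2+\alpha_\infty}$, which is what you need.

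\textbf{Step 3 does not work as written.} Nothing shows that the higher Picard terms are small at $t_k$, and scaling suggests the opposite. The expansion parameter is $t_k\|u_{0,k}\|_{L^\infty}^{\rho-1}\sim c\,\epsilon_0^{\rho-1}\|\phi\|_{L^\infty}^{\rho-1}\lambda_k^{\delta}$, which diverges exactly where $\mathcal N_1$ inflates, so the Picard series is not perturbative there.

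The paper does not attempt this. It argues by contradiction with the bound $\|\mathcal N_1(u_0)(t)\|_{L^q}\le C\|u_0\|_{L^q}^{\rho}$, which it asserts Hadamard well-posedness would force. That implication is itself heuristic, but you should phrase your last step the same way rather than through smallness of the remainder.

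Finally, the paper takes $A_\lambda=\epsilon\lambda^{N/q}(\ln\lambda)^{-1}$ so that the data tend to $0$ in $L^q$. Your fixed-size data meet the stated bound $<\epsilon$, but inserting the logarithm costs nothing since $\lambda^\delta$ dominates it.
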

	\begin{proof}
		Assuming, for the sake of contradiction, that the Cauchy problem is locally well-posed in $L^q(\mathbb{R}^N)$ in the Hadamard sense, the data-to-solution map must be uniformly continuous near the origin. This continuity implies that its first Picard iterate,
		\begin{equation*}
			\mathcal{N}_1(u_0)(t) = \int_0^t S(t-s) f(S(s)u_0) ds,
		\end{equation*}
		must satisfy the uniform local estimate
		\begin{equation*}
			\|\mathcal{N}_1(u_0)(t)\|_{L^q(\mathbb{R}^N)} \le C \|u_0\|_{L^q(\mathbb{R}^N)}^\rho,
		\end{equation*}
		for all sufficiently small data.
		
		Let $\phi \in \mathcal{S}(\mathbb{R}^N)$ be a non-negative cut-off profile such that $\phi \not\equiv 0$. For a scaling parameter $\lambda \ge 1$ and an amplitude $A_\lambda > 0$, the localized initial datum is defined as $u_{0,\lambda}(x) = A_\lambda \phi(\lambda x)$, which scales as $\|u_{0,\lambda}\|_{L^q(\mathbb{R}^N)} = A_\lambda \lambda^{-N/q} \|\phi\|_{L^q(\mathbb{R}^N)}$. Let $\sigma = \frac{2}{1+\alpha_\infty}$. Setting $y = \lambda x$ and $\zeta = \xi/\lambda$, the linear evolution driven by the resolvent family condenses into
		\begin{equation*}
			S(\theta \lambda^{-\sigma})u_{0,\lambda}(y/\lambda) = A_\lambda \big[ K(\theta, \lambda)\phi \big](y),
		\end{equation*}
		where $K(\theta, \lambda)$ is defined via its Fourier multiplier
		\begin{equation*}
			K(\theta, \lambda)\phi(y) = \frac{1}{(2\pi)^N} \int_{\mathbb{R}^N} e^{iy \cdot \zeta} \widetilde{\Psi}(\theta \lambda^{-\sigma}, \zeta \theta^{1/\sigma}) \hat{\phi}(\zeta) d\zeta.
		\end{equation*}
		
		Substituting $f(S(\theta \lambda^{-\sigma})u_{0,\lambda}) = A_\lambda^\rho f(K(\theta, \lambda)\phi)$ into the integral formulation evaluated at the trajectory time $t = \tau \lambda^{-\sigma}$, the expression becomes
		\begin{equation*}
			\mathcal{N}_1(u_{0,\lambda})(\tau \lambda^{-\sigma}, y/\lambda) = \lambda^{-\sigma} A_\lambda^\rho \int_0^\tau K(\tau-\theta, \lambda) \big[ f(K(\theta, \lambda)\phi) \big](y) d\theta.
		\end{equation*}
		Taking the $L^q$-norm with respect to $x = y/\lambda$ extracts the volumetric factor $\lambda^{-N/q}$, yielding
		\begin{equation}\label{eq:picard_lq_prelimit}
			\|\mathcal{N}_1(u_{0,\lambda})(\tau \lambda^{-\sigma}, \cdot)\|_{L^q(\mathbb{R}^N)} = A_\lambda^\rho \lambda^{-\sigma - \frac{N}{q}} \| F_\lambda \|_{L^q(\mathbb{R}^N)},
		\end{equation}
		where $F_\lambda(y) := \int_0^\tau K(\tau-\theta, \lambda) \big[ f(K(\theta, \lambda)\phi) \big](y) d\theta$.
		
		To obtain a uniform lower bound for the pre-limit profile, the spatial mass of $F_\lambda$ in the frequency domain at the origin is evaluated as 
		\begin{equation*}
			\widehat{F_\lambda}(0) = \int_0^\tau \widehat{K}(\tau-\theta, \lambda; 0) \cdot \mathcal{F}\big\{ f(K(\theta, \lambda)\phi) \big\}(0) d\theta.
		\end{equation*}
		Since $\widehat{K}(t, \lambda; 0) = \widetilde{\Psi}(t \lambda^{-\sigma}, 0) = 1$, the mass simplifies to
		\begin{equation*}
			\widehat{F_\lambda}(0) = \int_0^\tau \left( \int_{\mathbb{R}^N} f(K(\theta, \lambda)\phi(y)) dy \right) d\theta.
		\end{equation*}
		
		Next, the nonlinear mass functional $\mathcal{I}_\lambda(\theta) := \int_{\mathbb{R}^N} f(K(\theta, \lambda)\phi(y)) dy$ is shown to remain uniformly close to its initial state for all $\lambda \ge 1$. At $\theta = 0$, $\mathcal{I}_\lambda(0) = \|\phi\|_{L^\rho(\mathbb{R}^N)}^\rho$. For $\theta > 0$, the multiplier error is analyzed. Setting $t = \theta \lambda^{-\sigma}$, the difference symbol $\widetilde{\Psi}(t, \eta) - 1$ with $\eta = \zeta \theta^{1/\sigma}$ is represented via the Bromwich contour $\Gamma'$ as
		\begin{equation*}
			\widetilde{\Psi}(t, \eta) - 1 = -|\eta|^2 \frac{1}{2\pi i} \int_{\Gamma'} \frac{e^\mu m(t,\mu)}{\mu(\mu + |\eta|^2 m(t,\mu))} d\mu,
		\end{equation*}
		where $m(t,\mu) = \mu^{-\alpha_\infty} (\mu/t)^{\alpha_\infty} \hat{g}(\mu/t)$. To establish the uniform regularity of this transition, the auxiliary symbol is defined as
		\begin{equation*}
			m_\lambda(\zeta) := \frac{\widetilde{\Psi}(\theta \lambda^{-\sigma}, \zeta \theta^{1/\sigma}) - 1}{|\zeta \theta^{1/\sigma}|^2} = -\frac{1}{2\pi i} \int_{\Gamma'} \frac{e^\mu m(t,\mu)}{\mu(\mu + |\zeta \theta^{1/\sigma}|^2 m(t,\mu))} d\mu.
		\end{equation*}
		To ensure that $m_\lambda(\zeta)$ belongs uniformly to the H\"ormander class $S^0_{1,0}$ without scaling degeneration as $\lambda \to \infty$ (and thus $t \to 0$), the limit $\zeta \to 0$ is explicitly controlled. The derivative with respect to the frequency squared yields the principal value at the origin:
		\begin{equation*}
			M_0(0) := \lim_{t \to 0^+} m_\lambda(0) = -\frac{1}{2\pi i} \int_{\Gamma'} \frac{e^\mu m(0,\mu)}{\mu^2} d\mu.
		\end{equation*}
		By hypothesis (H2), as $\lambda \to \infty$ and $t \to 0$, the high-frequency limit stabilizes to $m(0,\mu) = C_1 \mu^{-\alpha_\infty}$. Thus, the limiting integral evaluates precisely to the canonical Hankel representation of the reciprocal Gamma function
		\begin{equation*}
			M_0(0) = -\frac{C_1}{2\pi i} \int_{\Gamma'} \frac{e^\mu}{\mu^{2+\alpha_\infty}} d\mu = -\frac{C_1}{\Gamma(2+\alpha_\infty)}.
		\end{equation*}
		As $C_1 > 0$ and the Gamma function is positive for positive real arguments, the residue constitutes a non-zero finite constant, confirming the non-degeneracy of the multiplier at the origin. 
		
		To establish uniform bounds for the spatial derivatives, $\nabla_\zeta m_\lambda(\zeta)$ is evaluated. Applying the chain rule under the integral sign produces an inner factor of $2\zeta \theta^{2/\sigma}$, which is balanced by the squared parameter $|\zeta \theta^{1/\sigma}|^2$ within the denominator. Consequently, the uniform coercivity property $|D| \ge C(r + |\zeta \theta^{1/\sigma}|^2 r^{-\alpha_\infty})$ ensures stable bounds, yielding
		\begin{equation*}
			|\nabla_\zeta^\beta m_\lambda(\zeta)| \le C_\beta (1+|\zeta \theta^{1/\sigma}|)^{-|\beta|} \le C_\beta (1+|\zeta|)^{-|\beta|},
		\end{equation*}
		uniformly for all $\lambda \ge 1$ and $\theta \le 1$. By the Mikhlin-H\"ormander Multiplier Theorem, the operator associated with $m_\lambda$ is bounded from $L^\rho(\mathbb{R}^N)$ to $L^\rho(\mathbb{R}^N)$ with a constant $C_M$ independent of $\lambda$. Since $K(\theta, \lambda)\phi - \phi = \theta^{2/\sigma} \mathcal{F}^{-1} \big( m_\lambda(\zeta) \mathcal{F}(-\Delta \phi) \big)$ and $-\Delta \phi \in L^\rho(\mathbb{R}^N)$, it follows that
		\begin{equation*}
			\|K(\theta, \lambda)\phi - \phi\|_{L^\rho(\mathbb{R}^N)} \le C_M \theta^{2/\sigma} \|\Delta \phi\|_{L^\rho(\mathbb{R}^N)} = C_\phi \theta^{2/\sigma},
		\end{equation*}
		where $C_\phi$ is invariant with respect to $\lambda$. 
		
		Bounding the mass deviation relies on the elementary inequality $|f(a)-f(b)| \le C_\rho \big(|a|^{\rho-1} + |b|^{\rho-1}\big)|a-b|$. Integrating over $\mathbb{R}^N$ and applying H\"older's inequality with conjugate exponents $\rho$ and $\frac{\rho}{\rho-1}$ yields
		\begin{align*}
			|\mathcal{I}_\lambda(\theta) - \mathcal{I}_\lambda(0)| &\le \int_{\mathbb{R}^N} |f(K(\theta, \lambda)\phi) - f(\phi)| dy \\
			&\le C_\rho \left( \|K(\theta, \lambda)\phi\|_{L^\rho}^{\rho-1} + \|\phi\|_{L^\rho}^{\rho-1} \right) \|K(\theta, \lambda)\phi - \phi\|_{L^\rho}.
		\end{align*}
		Given that linear convolution operators acting on the Schwartz class preserve integrability, the multiplier terms in the parentheses are bounded by a universal constant depending only on $\phi$. Substituting the uniform error bound produces
		\begin{equation*}
			|\mathcal{I}_\lambda(\theta) - \mathcal{I}_\lambda(0)| \le \widetilde{C}_\phi \theta^{2/\sigma}.
		\end{equation*}
		Choosing $\tau^\ast = \left( \frac{\|\phi\|_{L^\rho}^\rho}{2 \widetilde{C}_\phi} \right)^{\sigma/2} > 0$ guarantees $\mathcal{I}_\lambda(\theta) \ge \frac{1}{2}\|\phi\|_{L^\rho(\mathbb{R}^N)}^\rho$, which implies $\widehat{F_\lambda}(0) \ge c_0 > 0$ uniformly for all $\lambda \ge 1$.
		
		To address the spatial decay of $F_\lambda(y)$ near the singular limit $\theta \to 0$, the spatial kernel $\mathcal{G}_t(x) = \mathcal{F}^{-1}(\widetilde{\Psi}(t,\cdot))(x)$ associated with the H\"ormander class $S^{-2}_{1,0}$ is analyzed under the scaling dilation $\theta^{-N/\sigma}\mathcal{G}_t(y\theta^{-1/\sigma})$. To prevent the potential breakdown of spatial localization as $\theta \to 0$, the uniform envelope function is introduced:
		\begin{equation*}
			\mathcal{E}(y) := \sup_{\theta \in (0, \tau^\ast]} \left| \left[ \theta^{-N/\sigma} \mathcal{G}_t(\cdot \theta^{-1/\sigma}) \ast f(K(\theta, \lambda)\phi) \right](y) \right|.
		\end{equation*}
		Applying Peetre's inequality (see Appendix A for the detailed scaling neutralization and uniform bounds), the convolution integral can be decomposed. Since $\phi \in \mathcal{S}(\mathbb{R}^N)$, the power-type nonlinearity satisfies the faster decay profile $|f(K(\theta,\lambda)\phi(z))| \le C (1+|z|)^{-\rho(N+2)}$. Because $\rho > 1$, the excess polynomial weight is absorbed, yielding
		\begin{align*}
			|F_\lambda(y)| &\le \int_0^{\tau^\ast} \int_{\mathbb{R}^N} \theta^{-N/\sigma} |\mathcal{G}_t((y-z)\theta^{-1/\sigma})| |f(K(\theta, \lambda)\phi(z))| dz d\theta \\
			&\le C_0 (1+|y|)^{-(N+2)},
		\end{align*}
		where the constant $C_0 > 0$ is invariant during the transient limit $\theta \to 0$, as the regularizing properties of $\phi$ prevent the concentration of the Dirac mass from inducing norm explosions. 
		
		With this uniform polynomial envelope established, the strictly positive spatial mass $\widehat{F_\lambda}(0) \ge c_0 > 0$ connects directly to the required $L^q$ lower bound. Splitting the spatial integration over a ball $B_R$ and its complement yields
		\begin{equation*}
			c_0 \le \int_{\mathbb{R}^N} F_\lambda(y) dy \le \int_{|y| \le R} |F_\lambda(y)| dy + \int_{|y| > R} |F_\lambda(y)| dy.
		\end{equation*}
		Applying the uniform envelope $|F_\lambda(y)| \le C_0(1+|y|)^{-(N+2)}$, the tail integral is heavily suppressed for $N \ge 1$:
		\begin{equation*}
			\int_{|y| > R} |F_\lambda(y)| dy \le C_0 \int_{|y| > R} |y|^{-(N+2)} dy \le \frac{C_0'}{R^2}.
		\end{equation*}
		Choosing $R > 0$ sufficiently large such that the tail is bounded by $c_0/2$ confines the essential mass to the compact domain $B_R$, guaranteeing 
		\[\int_{|y| \le R} |F_\lambda(y)| dy \ge c_0/2.\] 
		Applying H\"older's inequality on this bounded support extracts the uniform Lebesgue lower bound
		\begin{equation*}
			\frac{c_0}{2} \le |B_R|^{1 - \frac{1}{q}} \|F_\lambda\|_{L^q(B_R)} \implies \|F_\lambda\|_{L^q(\mathbb{R}^N)} \ge \|F_\lambda\|_{L^q(B_R)} \ge c_1 > 0,
		\end{equation*}
		where $c_1 := \frac{c_0}{2} |B_R|^{\frac{1}{q}-1}$ is independent of the modulation parameter $\lambda$. 
		
		Substituting this into \eqref{eq:picard_lq_prelimit} and choosing the supercritical amplitude $A_\lambda = \epsilon \lambda^{N/q}(\ln \lambda)^{-1}$ yields the lower bound
		\begin{equation*}
			\|\mathcal{N}_1(u_{0,\lambda})(\tau^\ast \lambda^{-\sigma}, \cdot)\|_{L^q(\mathbb{R}^N)} \ge c_1 \epsilon^\rho (\ln \lambda)^{-\rho} \lambda^{(\rho-1)\frac{N}{q} - \sigma}.
		\end{equation*}
		In the supercritical regime $1 < q < q_c$, replacing $\sigma = \frac{2}{1+\alpha_\infty}$ and utilizing the definition of the critical threshold $q_c$ reveals that the structural exponent satisfies
		\begin{equation*}
			\delta := (\rho-1)\frac{N}{q} - \sigma = \frac{2(q_c - q)}{q(1+\alpha_\infty)}.
		\end{equation*}
		Considering that $q < q_c$, the exponent $\delta$ is strictly positive. Extracting a discrete sequence $\lambda_k \to \infty$ and evaluating at the corresponding  times $t_k := \tau^\ast \lambda_k^{-\sigma} \to 0^+$, the initial data is defined as $u_{0,k} := u_{0,\lambda_k}$. While the initial profile satisfies $\|u_{0,k}\|_{L^q} \le \epsilon (\ln \lambda_k)^{-1} \to 0$, the Picard iterate collapses as
		\begin{equation*}
			\|\mathcal{N}_1(u_{0,k})(t_k, \cdot)\|_{L^q(\mathbb{R}^N)} \ge c_1 \epsilon^\rho (\ln \lambda_k)^{-\rho} \lambda_k^{\delta} \xrightarrow{k \to \infty} \infty.
		\end{equation*}
		This divergence confirms that the data-to-solution map fails to be uniformly continuous at the origin along the sequence $t_k$, completing the proof.
	\end{proof}

\section{Global existence and asymptotic decay}
In contrast to local well-posedness, which is governed by the short-time super-diffusive regularization $\alpha_\infty$, the global dynamics depend on the long-time memory tail $\alpha_0$. Establishing global existence requires tracking the solution across the transition between these two frequency regimes. To define the resolution space, the temporal singularity exponents from Lemma \ref{lem:smoothing} and Lemma \ref{lem:smoothing_asymptotic} are recalled. For $1 \le a \le p < \infty$, the high-frequency and low-frequency smoothing indices are respectively denoted by
\begin{equation*}
	\gamma_{a,p} = \frac{N(1+\alpha_\infty)}{2}\left(\frac{1}{a} - \frac{1}{p}\right), \quad \text{and} \quad \beta_{a,p} = \frac{N(1+\alpha_0)}{2}\left(\frac{1}{a} - \frac{1}{p}\right).
\end{equation*}
The asymptotic decay demands initial data in a lower Lebesgue space $L^m(\mathbb{R}^N)$, where $m < q_c$. The integrability of the Volterra integral at infinity reveals the critical mass threshold $m_c$. For the solution to decay globally without experiencing finite-time blow-up, operating in the regime $m_c > 1$ is structurally necessary. This barrier recovers the nonlocal Fujita-type critical exponent, denoted by $\rho_F$:
\begin{equation*}
	m_c := \frac{N(\rho-1)(1+\alpha_0)}{2}, \quad \text{and} \quad \rho_F := 1 + \frac{2}{N(1+\alpha_0)}.
\end{equation*}
Furthermore, to unify the short-time and long-time integrations, the nonlinearity must bridge the spectral gap between the regimes, requiring $\rho > \frac{1+\alpha_\infty}{1+\alpha_0}$. This motivates the definition of the critical threshold for global existence:
\begin{equation*}
	\rho_0 := \max\left( \rho_F, \frac{1+\alpha_\infty}{1+\alpha_0} \right).
\end{equation*}
Let $p = \rho m$. To reflect the dual temporal scaling, the global resolution space $X_\infty$ is constructed through the continuous weight function
\begin{equation*}
	\Theta(t) = \max \left( t^{\gamma_{q_c, p}}, t^{\beta_{m, p}} \right).
\end{equation*}
Consequently, the Banach space $X_\infty = \{ u \in C((0,\infty); L^p(\mathbb{R}^N)) : \|u\|_{X_\infty} < \infty \}$ is formalized, endowed with the weighted norm
\begin{equation*}
	\|u\|_{X_\infty} = \sup_{t > 0} \Theta(t) \|u(t)\|_{L^p(\mathbb{R}^N)}.
\end{equation*}
This functional framework, relying on piecewise temporal weights to absorb the local singularity while capturing the asymptotic behavior, is inspired by the contraction arguments of Weissler \cite{Weissler1981}.

\begin{theorem}\label{thm:global_existence}
	Let $g$ satisfy (H1), (H2) and (H3) and assume the nonlinearity satisfies $\rho > \rho_0$. Let the initial datum $u_0 \in L^{q_c}(\mathbb{R}^N) \cap L^m(\mathbb{R}^N)$ for some fixed $m \in \left(\frac{q_c}{\rho}, m_c\right)$. There exists $\delta > 0$ such that if $\|u_0\|_{L^{q_c}} + \|u_0\|_{L^m} \le \delta$, the Cauchy problem \eqref{eq:main} admits a unique global-in-time mild solution $u \in X_\infty$.
\end{theorem}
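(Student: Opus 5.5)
We prove Theorem \ref{thm:global_existence} by a contraction argument for $\mathcal{T}(u)=S(t)u_0+\mathcal{N}(u)$ on a small closed ball $B_R\subset X_\infty$, with $p=\rho m$ and $r=p/\rho=m$. By the constraint $m>q_c/\rho$ we have $p>q_c$, so the local indices from the proof of Theorem \ref{thm:local_existence} apply. By $m<m_c$, the long-time index satisfies
\[
\beta_{m,p}=\frac{N(1+\alpha_0)}{2}\cdot\frac{\rho-1}{\rho m}=\frac{m_c}{\rho m}.
\]
The plan is to check that every singular exponent is integrable near $s=0$ and near $s=t$, and that every time exponent produced at infinity is dominated by the weight. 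We then absorb the nonlinear term through smallness of $R$.

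\textbf{Linear part.} For $t\le 1$ we use Lemma \ref{lem:smoothing} with $q=q_c$, which gives $\|S(t)u_0\|_{L^p}\le Ct^{-\gamma_{q_c,p}}\|u_0\|_{L^{q_c}}$. For $t\ge1$ we use Lemma \ref{lem:smoothing_asymptotic} from $L^m$, which gives $t^{-\beta_{m,p}}\|u_0\|_{L^m}$. The dimensional restrictions hold because $(1/m-1/p)=(\rho-1)/p<2/N$ and $1/q_c-1/p<2/N$. Since $\Theta(t)$ equals $t^{\gamma_{q_c,p}}$ on $(0,1]$ and $t^{\beta_{m,p}}$ on $[1,\infty)$ (after possibly checking which power dominates), we get $\|S(\cdot)u_0\|_{X_\infty}\le C\delta$.

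\textbf{Nonlinear part.} We write $\|f(u(s))\|_{L^m}=\|u(s)\|_{L^p}^\rho\le \Theta(s)^{-\rho}\|u\|_{X_\infty}^\rho$. We then estimate $\int_0^t\|S(t-s)\|_{L^m\to L^p}\Theta(s)^{-\rho}\,ds$, using the operator norm $(t-s)^{-\gamma_{m,p}}$ when $t-s\le1$ and $(t-s)^{-\beta_{m,p}}$ when $t-s\ge1$. We split into $t\le1$ and $t\ge1$, and for $t\ge 1$ further into $s\in(0,1)$, $s\in(1,t/2)$, $s\in(t/2,t-1)$ and $s\in(t-1,t)$.
\begin{itemize}
\item[(i)] Integrability near $s=0$ needs $\rho\gamma_{q_c,p}<1$, i.e.\ $p<\rho q_c$, which holds since $m<q_c$. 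Integrability near $s=t$ needs $\gamma_{m,p}=\frac{(1+\alpha_\infty)}{(1+\alpha_0)}\beta_{m,p}<1$. This is where the bridge condition $\rho>\frac{1+\alpha_\infty}{1+\alpha_0}$ together with $m>q_c/\rho$ should be invoked.
\item[(ii)] At infinity, the piece $s\in(t/2,t)$ contributes roughly $t^{-\rho\beta_{m,p}}\cdot t^{1-\beta_{m,p}}$ (or $t^{-\rho\beta_{m,p}}$ if $\beta_{m,p}>1$). The piece $s\in(1,t/2)$ contributes $t^{-\beta_{m,p}}\int_1^{t/2}s^{-\rho\beta_{m,p}}\,ds$. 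Both must be $\lesssim t^{-\beta_{m,p}}$. This requires $\rho\beta_{m,p}>1$, i.e.\ $m<m_c$, and that is exactly why $m<m_c$ and $\rho>\rho_F$ (which makes the interval $(q_c/\rho,m_c)$ nonempty) enter.
\end{itemize}
The result is $\|\mathcal{N}(u)\|_{X_\infty}\le C_N\|u\|_{X_\infty}^\rho$ with $C_N$ independent of time. The same computation with $|f(u)-f(v)|\le C(|u|^{\rho-1}+|v|^{\rho-1})|u-v|$ and Hölder gives $\|\mathcal{N}(u)-\mathcal{N}(v)\|_{X_\infty}\le C\rho R^{\rho-1}\|u-v\|_{X_\infty}$.

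\textbf{Closing the argument.} We choose $R=2C\delta$ with $\delta$ small, so that $\mathcal{T}$ maps $B_R$ into itself and is a contraction. Continuity in $t$ with values in $L^p$ follows from Remark \ref{rem:holder_continuity} and dominated convergence. Uniqueness in $X_\infty$ for small solutions follows from the contraction; for general solutions we would use the Gr\"onwall argument of Step 4 of Theorem \ref{thm:local_existence}.

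The main obstacle is the bookkeeping at the near-diagonal range $s\in(t-1,t)$ for $t\ge1$, and at the crossover $t\sim1$. There the local kernel exponent $\gamma_{m,p}$, governed by $\alpha_\infty$, meets the asymptotic weight $s^{-\rho\beta_{m,p}}$, governed by $\alpha_0$. Verifying $\gamma_{m,p}<1$ from the stated hypotheses is the step where the spectral bridge $\rho>\frac{1+\alpha_\infty}{1+\alpha_0}$ must be used, and we would first try to derive it by writing $\gamma_{m,p}=\frac{1+\alpha_\infty}{1+\alpha_0}\cdot\frac{m_c}{\rho m}$ and bounding $\frac{m_c}{m}$ via $m>q_c/\rho$. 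If this fails, we would shrink the admissible range of $m$.
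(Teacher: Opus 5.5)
Your plan follows the paper's proof. It uses the same space $X_\infty$ with weight $\Theta$, the same choice $p=\rho m$ (so $r=m$), the same splitting of the Duhamel integral at $t/2$, and the same three exponent conditions $\rho\gamma_{q_c,p}<1$, $\gamma_{m,p}<1$, $\rho\beta_{m,p}>1$. Your extra cut at $s=t-1$ only makes explicit the piecewise kernel bound ($\tau^{-\gamma_{m,p}}$ for $\tau\le 1$, $\tau^{-\beta_{m,p}}$ for $\tau\ge 1$) on which the paper's estimate of $I_2$ relies.

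The step you single out as the main obstacle is not an obstacle. With $p=\rho m$,
\[
\gamma_{m,p}=\frac{N(1+\alpha_\infty)}{2}\cdot\frac{\rho-1}{\rho m}=\frac{q_c}{\rho m}=\frac{q_c}{p}.
\]
This is exactly what your rewriting $\frac{1+\alpha_\infty}{1+\alpha_0}\cdot\frac{m_c}{\rho m}$ produces, since $\frac{1+\alpha_\infty}{1+\alpha_0}=\frac{q_c}{m_c}$. So $\gamma_{m,p}<1$ is literally the hypothesis $m>q_c/\rho$, which is how the paper argues. Nothing needs to be shrunk, and the bridge condition plays no role here.

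Your bookkeeping of the hypotheses is also swapped. The condition $\rho>\frac{1+\alpha_\infty}{1+\alpha_0}$ is equivalent to $q_c/\rho<m_c$, i.e.\ to the interval $(q_c/\rho,m_c)$ being nonempty. The condition $\rho>\rho_F$ is equivalent to $m_c>1$, so that this interval contains exponents $m\ge 1$. Once $m$ is fixed, the estimates use only $q_c/\rho<m<m_c$ and $m<q_c$.

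That last inequality is where a restriction is genuinely hidden. You assert $\rho\gamma_{q_c,p}<1$ ``since $m<q_c$'', but $m<m_c$ implies this only when $m_c\le q_c$, i.e.\ $\alpha_0\le\alpha_\infty$. The same condition settles your deferred check on $\Theta$, because $\gamma_{q_c,p}\le\beta_{m,p}$ is equivalent to $\rho m-q_c\le(\rho-1)m_c$. The paper assumes $\alpha_0<\alpha_\infty$ tacitly (see Remark \ref{rem:spectral_bridge}). For kernels with $\alpha_0>\alpha_\infty$, such as the multi-term kernels of Example 2, you must impose $m<\min(q_c,m_c)$. Otherwise $p\ge\rho q_c$, so $\rho\gamma_{q_c,p}\ge 1$ and the contribution of $s\in(0,1)$ is not integrable.

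Finally, the Gr\"onwall argument of Step 4 of Theorem \ref{thm:local_existence} needs, near $t=0$, the vanishing condition $t^{\gamma_{q_c,p}}\|u(t)\|_{L^p}\to 0$. Membership in $X_\infty$ does not provide it. So your argument, like the paper's reduction to $E_2$, gives uniqueness in $B_R$, or among solutions satisfying that condition, rather than in all of $X_\infty$.
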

\begin{proof}
	Mild solutions are constructed as fixed points of the operator 
	\[\mathcal{T}(u)(t) = u_L(t) + \mathcal{N}(u)(t),\] 
	where $u_L(t) = S(t)u_0$ and $\mathcal{N}(u)(t) = \int_0^t S(t-s)f(u(s)) ds$, within a closed ball $B_R \subset X_\infty$. Selecting $m > \frac{q_c}{\rho}$ guarantees $p > q_c$, which ensures the topological bounds required for the short-time regime.
	
	\noindent\textbf{Step 1: Unified linear estimates.}
	Combining the estimates from Lemma \ref{lem:smoothing} and Lemma \ref{lem:smoothing_asymptotic}, the resolvent operator satisfies both temporal decay rates for all $t > 0$. Considering that $u_0 \in L^{q_c}(\mathbb{R}^N) \cap L^m(\mathbb{R}^N)$, the linear evolution is bounded directly by
	\begin{align*}
		\|u_L(t)\|_{L^p} &\le C \min \left( t^{-\gamma_{q_c,p}} \|u_0\|_{L^{q_c}}, t^{-\beta_{m,p}} \|u_0\|_{L^m} \right) \\
		&\le C \min \left( t^{-\gamma_{q_c,p}}, t^{-\beta_{m,p}} \right) (\|u_0\|_{L^{q_c}} + \|u_0\|_{L^m}).
	\end{align*}
	Using the identity $\min(t^{-\gamma_{q_c,p}}, t^{-\beta_{m,p}}) \equiv \Theta(t)^{-1}$, it follows that
	\begin{equation*}
		\|u_L(t)\|_{L^p} \le C \Theta(t)^{-1} (\|u_0\|_{L^{q_c}} + \|u_0\|_{L^m}).
	\end{equation*}
	Multiplying by $\Theta(t)$ and taking the supremum over $(0, \infty)$ yields
	\begin{equation*}
		\|u_L\|_{X_\infty} \le C_L (\|u_0\|_{L^{q_c}} + \|u_0\|_{L^m}) =: \delta_0.
	\end{equation*}
	The continuity $u_L \in C((0,\infty); L^p(\mathbb{R}^N))$ follows from the strong continuity of $S(t)$.
	
	\noindent\textbf{Step 2: Nonlinear estimates in the asymptotic regime.}
	For any $u \in X_\infty$, it follows by definition that $\|u(s)\|_{L^p} \le \Theta(s)^{-1} \|u\|_{X_\infty}$ for all $s > 0$. For bounded time intervals, say $t \le 2$, the local integrability of the Volterra kernel is governed by the short-time singularity $\gamma_{q_c,p}$. Since the inequality $\Theta(s)^{-1} = \min(s^{-\gamma_{q_c, p}}, s^{-\beta_{m, p}}) \le s^{-\gamma_{q_c, p}}$ holds for all $s > 0$, the temporal singularity on bounded intervals behaves as in Theorem \ref{thm:local_existence}, yielding
	\begin{equation*}
		\|\mathcal{N}(u)(t)\|_{L^p} \le C t^{-\gamma_{q_c,p}} \|u\|_{X_\infty}^\rho.
	\end{equation*}
	
	For $t > 2$, the Volterra integral is split at the midpoint to isolate the memory effects:
	\begin{equation*}
		\mathcal{N}(u)(t) = \int_0^{t/2} S(t-s)f(u(s)) ds + \int_{t/2}^t S(t-s)f(u(s)) ds := I_1(t) + I_2(t).
	\end{equation*}
	For $I_1(t)$, the elapsed time satisfies $t-s \ge t/2 > 1$. Applying Lemma \ref{lem:smoothing_asymptotic} with base space $L^m$, and noting that $\|f(u(s))\|_{L^m} = \|u(s)\|_{L^p}^\rho$, the estimate becomes
	\begin{align*}
		\|I_1(t)\|_{L^p} &\le C \int_0^{t/2} (t-s)^{-\beta_{m,p}} \|u(s)\|_{L^p}^\rho ds \\
		&\le C t^{-\beta_{m,p}} \left( \int_0^1 s^{-\rho \gamma_{q_c,p}} ds + \int_1^{t/2} s^{-\rho \beta_{m,p}} ds \right) \|u\|_{X_\infty}^\rho.
	\end{align*}
	The first integral is finite because $q_c$ forces $\rho \gamma_{q_c,p} < 1$. For the second integral, the structural condition $m < m_c$ ensures that
	\begin{equation*}
		\rho \beta_{m,p} = \frac{N(1+\alpha_0)}{2} \left( \frac{\rho}{m} - \frac{\rho}{p} \right) = \frac{N(\rho-1)(1+\alpha_0)}{2m} = \frac{m_c}{m} > 1.
	\end{equation*}
	Consequently, the integral 
	\begin{equation*}
		\int_1^\infty s^{-\rho \beta_{m,p}} ds
	\end{equation*}
	converges, producing the bound 
	\begin{equation*}
		\|I_1(t)\|_{L^p} \le C t^{-\beta_{m,p}} \|u\|_{X_\infty}^\rho.
	\end{equation*}
	
	For $I_2(t)$, the evaluation occurs at $s \ge t/2 > 1$, which implies 
	\begin{equation*}
		\|u(s)\|_{L^p}^\rho \le C t^{-\rho \beta_{m,p}} \|u\|_{X_\infty}^\rho.
	\end{equation*}
	Setting $\tau = t-s \in [0, t/2]$ and applying the unified $L^m \to L^p$ bound yields
	\begin{align*}
		\|I_2(t)\|_{L^p} &\le C t^{-\rho \beta_{m,p}} \|u\|_{X_\infty}^\rho \int_0^{t/2} \min(\tau^{-\gamma_{m,p}}, \tau^{-\beta_{m,p}}) d\tau \\
		&\le C t^{-\rho \beta_{m,p}} (1 + t^{1-\beta_{m,p}}) \|u\|_{X_\infty}^\rho.
	\end{align*}
	Considering that $\gamma_{m,p} = \frac{q_c}{\rho m} < 1$, the integral is locally finite, and since $\rho \beta_{m,p} > 1$, the global integrability is preserved. Expanding the algebraic product yields
	\begin{equation*}
		\|I_2(t)\|_{L^p} \le C \left( t^{-\rho \beta_{m,p}} + t^{1 - \beta_{m,p}(\rho + 1)} \right) \|u\|_{X_\infty}^\rho.
	\end{equation*}
	Both temporal exponents dominate the target asymptotic decay $t^{-\beta_{m,p}}$. Specifically, since $\rho > 1$ and $\beta_{m,p} > 0$, it holds that $-\rho \beta_{m,p} < -\beta_{m,p}$. Furthermore, the critical integrability condition $\rho \beta_{m,p} > 1$ guarantees that
	\begin{equation*}
		1 - \beta_{m,p}(\rho + 1) = 1 - \rho \beta_{m,p} - \beta_{m,p} < -\beta_{m,p}.
	\end{equation*}
	Consequently, $I_2(t)$ decays faster than the principal memory tail governed by $I_1(t)$, ensuring it is structurally absorbed into the target bound. Multiplying by the dual weight $\Theta(t)$ and taking the supremum over $(0,\infty)$ yields 
	\begin{equation*}
		\sup_{t>0} \Theta(t)\|\mathcal{N}(u)(t)\|_{L^p} \le C_N \|u\|_{X_\infty}^\rho.
	\end{equation*}
	
	\noindent\textbf{Step 3: Continuity in time.}
	It must be ensured that $\mathcal{N}(u) \in C((0,\infty); L^p(\mathbb{R}^N))$. Let $t > 0$ and $h > 0$. The difference is written as
	\begin{align*}
		\mathcal{N}(u)(t+h) - \mathcal{N}(u)(t) &= \int_t^{t+h} S(t+h-s)f(u(s)) ds \\
		&+ \int_0^t [S(t+h-s) - S(t-s)]f(u(s)) ds.
	\end{align*}
	For the first term, defining the intermediate Lebesgue index $r = p/\rho$ and applying the $L^r \to L^p$ linear smoothing estimate produces a bound proportional to 
	\begin{equation*}
		\int_t^{t+h} (t+h-s)^{-\gamma_{r,p}} \|f(u(s))\|_{L^r} ds.
	\end{equation*}
	Since $\gamma_{r,p} < 1$ and $u \in X_\infty$, this integral vanishes as $h \to 0$. For the second term, the operator $S(t)$ exhibits local H\"older continuity in time, as established in Remark \ref{rem:holder_continuity}. Specifically, the bound 
	\begin{equation*}
		\|S(t+h-s) - S(t-s)\|_{L^r \to L^p} \le C h^\theta (t-s)^{-\tilde{\gamma}}
	\end{equation*}
	guarantees that the integrand is dominated by an $L^1(0,t)$ function. The Lebesgue Dominated Convergence Theorem (cf. Folland \cite{Folland1999}) then ensures the integral vanishes. The case $h < 0$ is analogous, establishing strong continuity.
	
	\noindent\textbf{Step 4: Contraction and global uniqueness in $X_\infty$.}
	Combining the linear and nonlinear estimates yields the bound for the operator $\mathcal{T}$ acting on $u \in X_\infty$:
	\begin{equation*}
		\|\mathcal{T}(u)\|_{X_\infty} \le \delta_0 + C_N \|u\|_{X_\infty}^\rho.
	\end{equation*}
		To establish the strict contraction property, let $u, v \in B_R \subset X_\infty$. Utilizing the algebraic inequality $|f(u) - f(v)| \le c (|u|^{\rho-1} + |v|^{\rho-1}) |u - v|$ and evaluating the $L^r$-norm for $r = p/\rho$, we obtain
	\begin{equation*}
		\|f(u(s)) - f(v(s))\|_{L^r} \le C \left( \|u(s)\|_{L^p}^{\rho-1} + \|v(s)\|_{L^p}^{\rho-1} \right) \|u(s) - v(s)\|_{L^p}.
	\end{equation*}
	Since $u, v \in B_R$, their norms are bounded by $\|u(s)\|_{L^p} \le R \Theta(s)^{-1}$ and $\|v(s)\|_{L^p} \le R \Theta(s)^{-1}$. Consequently, the nonlinear difference satisfies
	\begin{equation*}
		\|f(u(s)) - f(v(s))\|_{L^r} \le 2C R^{\rho-1} \Theta(s)^{-\rho} \|u - v\|_{X_\infty}.
	\end{equation*}
	Given that this bound exhibits the exact same temporal weight profile $\Theta(s)^{-\rho}$ as the single variable estimate $\|f(u(s))\|_{L^r} \le \Theta(s)^{-\rho} \|u\|_{X_\infty}^\rho$ derived in Step 2, applying the identical fractional integration arguments over the intervals $(0, t/2)$ and $(t/2, t)$ directly yields
	\begin{equation*}
		\|\mathcal{T}(u) - \mathcal{T}(v)\|_{X_\infty} \le \widetilde{C}_N R^{\rho-1} \|u - v\|_{X_\infty},
	\end{equation*}
	where $\widetilde{C}_N > 0$ is a structural constant.
	
	The operator $\mathcal{T}$ forms a strict contraction on $B_R$ provided that
	\begin{equation*}
		\delta_0 + C_N R^\rho \le R, \quad \text{and} \quad \widetilde{C}_N R^{\rho-1} \le \frac{1}{2}.
	\end{equation*}
	Setting $R = 2\delta_0$, the strict contraction conditions are simultaneously satisfied if the initial data size $\delta_0$ is chosen sufficiently small such that
	\begin{equation*}
		\delta_0 \le \min \left\{ (2^\rho C_N)^{-\frac{1}{\rho-1}}, \frac{1}{2}(2\widetilde{C}_N)^{-\frac{1}{\rho-1}} \right\}.
	\end{equation*}
	Under this explicit threshold, the Banach Fixed Point Theorem guarantees the existence of a unique global mild solution within the closed ball $B_R$.
	
	To extend this uniqueness to the entire space $X_\infty$, let $u, v \in X_\infty$ be any two mild solutions arising from $u_0$. Restricting the flow to the bounded interval $(0, 2]$ places both $u$ and $v$ within the local resolution space $E_2$. By the local uniqueness established in Theorem \ref{thm:local_existence}, it is rigorously deduced that $u \equiv v$ on $(0, 2]$. 
	
	Let $T_* = \sup\{ \tau > 0 : u \equiv v \text{ on } (0, \tau]\}$. By definition, $T_* \ge 2$. Assume, by contradiction, that $T_* < \infty$. For $t > T_*$, their difference satisfies
	\begin{equation*}
		\mathcal{N}(u)(t) - \mathcal{N}(v)(t) = \int_{T_*}^t S(t-s)[f(u(s)) - f(v(s))] ds.
	\end{equation*}
	Considering that $T_* \ge 2$, the temporal singularity at the origin is bypassed, and both $u$ and $v$ are uniformly bounded in $L^p(\mathbb{R}^N)$ on $[T_*, \infty)$. Let 
	\begin{equation*}
		L = \sup_{s \ge T_*} \left( \|u(s)\|_{L^p}^{\rho-1} + \|v(s)\|_{L^p}^{\rho-1} \right) < \infty.
	\end{equation*}
	Setting the intermediate Lebesgue index $r = p/\rho$ and applying the $L^r \to L^p$ linear smoothing estimate yields
	\begin{equation*}
		\|u(t) - v(t)\|_{L^p} \le C L \int_{T_*}^t (t-s)^{-\gamma_{r,p}} \|u(s) - v(s)\|_{L^p} ds.
	\end{equation*}
	Since $\gamma_{r,p} = q_c/p < 1$, the convolution kernel remains weakly singular. Invoking the generalized Gr\"onwall inequality (cf. Henry \cite{Henry1981}), it is deduced that $\|u(t) - v(t)\|_{L^p} = 0$ on an extended interval $[T_*, T_* + \epsilon]$ for some $\epsilon > 0$, contradicting the maximality of $T_*$. Therefore, $T_* = \infty$, confirming global uniqueness within the resolution space $X_\infty$ and concluding the proof.
\end{proof}

\begin{remark}[Algebraic coupling and the spectral bridge]\label{rem:spectral_bridge}
	Detailing the interplay between the short-time and long-time temporal weights provides a rigorous algebraic foundation for the global-in-time closure of the Duhamel mapping established in Theorem \ref{thm:global_existence}. The partition of the nonlinear Volterra integral at the midpoint $t/2$ for $t > 2$ separates the hereditary memory effects into two distinct spectral regimes, denoted by
	\begin{equation*}
		I_1(t) = \int_0^{t/2} S(t-s) f(u(s)) \, ds, \quad \text{and} \quad I_2(t) = \int_{t/2}^t S(t-s) f(u(s)) \, ds.
	\end{equation*}
	Evaluating the structural integrability of the transient branch $I_1(t)$, the elapsed time variable satisfies $t-s \ge t/2 > 1$. Consequently, Lemma \ref{lem:smoothing_asymptotic} supplies the unperturbed asymptotic $L^m \to L^p$ regularizing effect. Recalling that $\|f(u(s))\|_{L^m} = \|u(s)\|_{L^p}^\rho$, the continuous embedding associated with the resolution norm $\|u\|_{X_\infty}$ yields
	\begin{equation}\label{eq:i1_integral_split}
		\|I_1(t)\|_{L^p} \le C \int_0^{t/2} (t-s)^{-\beta_{m,p}} \Theta(s)^{-\rho} \, ds \cdot \|u\|_{X_\infty}^\rho \le C t^{-\beta_{m,p}} \int_0^{t/2} \Theta(s)^{-\rho} \, ds \cdot \|u\|_{X_\infty}^\rho,
	\end{equation}
	where the piecewise temporal weight satisfies $\Theta(s)^{-\rho} = \max\left( s^{-\rho \gamma_{q_c,p}}, s^{-\rho \beta_{m,p}} \right)$. For the integral in \eqref{eq:i1_integral_split} to converge uniformly as $t \to \infty$, the system must simultaneously satisfy a local integrability condition near the origin and a global decay condition at infinity.
	
	The singularity at the origin is governed by the high-frequency smoothing index $\gamma_{q_c,p}$. Invoking the explicit formulation of the critical Lebesgue threshold $q_c := \frac{N(\rho-1)(1+\alpha_\infty)}{2}$, it is observed that
	\begin{equation*}
		\gamma_{q_c,p} = \frac{N(1+\alpha_\infty)}{2}\left(\frac{1}{q_c} - \frac{1}{p}\right) = \frac{q_c}{\rho-1}\left(\frac{1}{q_c} - \frac{1}{p}\right) = \frac{1}{\rho-1}\left(1 - \frac{q_c}{p}\right).
	\end{equation*}
	Enforcing local integrability requires $\rho \gamma_{q_c,p} < 1$. Substituting the rewritten expression for $\gamma_{q_c,p}$, this inequality transforms into
	\begin{equation}\label{eq:local_cond_p}
		\frac{\rho}{\rho-1}\left(1 - \frac{q_c}{p}\right) < 1 \implies \rho - \frac{\rho q_c}{p} < \rho - 1 \implies p < \rho q_c.
	\end{equation}
	
	Concurrently, the integrability of the long-time memory tail requires $\rho \beta_{m,p} > 1$. Setting $p = \rho m$, the low-frequency regularizing index simplifies to
	\begin{equation*}
		\beta_{m,p} = \frac{N(1+\alpha_0)}{2}\left(\frac{1}{m} - \frac{1}{p}\right) = \frac{N(1+\alpha_0)}{2}\left(\frac{\rho-1}{p}\right) = \frac{m_c}{p},
	\end{equation*}
	where $m_c := \frac{N(\rho-1)(1+\alpha_0)}{2}$ represents the non-local critical mass threshold. Thus, the asymptotic non-explosion condition $\rho \beta_{m,p} > 1$ becomes
	\begin{equation}\label{eq:global_cond_p}
		\frac{\rho m_c}{p} > 1 \implies p < \rho m_c \implies m < m_c.
	\end{equation}
	
	The intersection of these integrability conditions reveals a fundamental topological constraint on the choice of the base Lebesgue space. From the high-frequency smoothing requirement $\gamma_{r,p} = q_c/p < 1$ established in the local theory (Section 4), we obtain the strict lower bound $p > q_c$. Simultaneously, the system enforces two distinct upper bounds: the local integrability limit $p < \rho q_c$ derived in \eqref{eq:local_cond_p}, and the global non-explosion condition $p < \rho m_c$ from \eqref{eq:global_cond_p}. 
	
	Given that the multi-scale kernel dictates the structural crossover $\alpha_0 < \alpha_\infty$, the non-local critical mass is unconditionally smaller than the local critical threshold ($m_c < q_c$). Consequently, the global upper bound is tighter ($\rho m_c < \rho q_c$), rendering the local condition \eqref{eq:local_cond_p} automatically satisfied for any admissible index. The topological window collapses securely to $q_c < p < \rho m_c$. 
	Since the relationship between the Lebesgue indices dictates $p = \rho m$, dividing this viable window by the non-linearity power $\rho$ yields the exact structural requirement for the intersection parameter
	\begin{equation*}
		m \in \left( \frac{q_c}{\rho}, m_c \right).
	\end{equation*}
	For this interval to be non-empty, the endpoints must satisfy the geometric compatibility relation
	\begin{equation*}
		\frac{q_c}{\rho} < m_c \implies \frac{N(\rho-1)(1+\alpha_\infty)}{2\rho} < \frac{N(\rho-1)(1+\alpha_0)}{2}.
	\end{equation*}
	Since $\rho > 1$, the common structural prefactor $\frac{N(\rho-1)}{2}$ is positive and can be canceled. Isolating the nonlinearity power $\rho$ yields
	\begin{equation*}
		\frac{1+\alpha_\infty}{\rho} < 1+\alpha_0 \implies \rho > \frac{1+\alpha_\infty}{1+\alpha_0}.
	\end{equation*}
	
	This algebraic relation demonstrates that the spectral bridge constraint $\rho > \frac{1+\alpha_\infty}{1+\alpha_0}$ is not an artifact of the time-weighted spaces, but a strict structural necessity. It represents the exact threshold below which the power-type nonlinear amplification overpowers the dual-scale anomalous dissipation, ultimately causing the contraction mapping to collapse.
\end{remark}

\begin{remark}\label{rem:thresholds}
	Two topological features of the resolution space $X_\infty$ are highlighted.
	
	\noindent\textbf{(i) Continuity at the origin and the integrability gap:} Although the mild solution satisfies $u \in C((0,\infty); L^p(\mathbb{R}^N))$, strong continuity at $t=0$ structurally fails in the $L^p(\mathbb{R}^N)$ topology. Considering that the global framework requires $m > q_c/\rho$, the target resolution space index satisfies $p = \rho m > q_c$. Consequently, the space $L^p(\mathbb{R}^N)$ lies strictly outside the natural interpolation range of the initial data $u_0 \in L^{q_c}(\mathbb{R}^N) \cap L^m(\mathbb{R}^N)$. Due to the instantaneous regularizing effect of the resolvent, this integrability gap implies that $\limsup_{t \to 0^+} \|u(t)\|_{L^p} = \infty$ whenever $u_0 \notin L^p(\mathbb{R}^N)$, thereby necessitating the singular temporal weight $\Theta(t)$. However, the initial datum is continuously attained in the topology of the critical base spaces, satisfying
	\begin{equation*}
		\lim_{t \to 0^+} \|u(t) - u_0\|_{L^{q_c}(\mathbb{R}^N) \cap L^m(\mathbb{R}^N)} = 0.
	\end{equation*}
	This follows from the strong continuity of the resolvent operator $S(t)$ in Lebesgue spaces without initial singularity and standard density arguments analogous to the local theory.
	
	\noindent\textbf{(ii) The non-existence threshold $\rho \le \rho_0$:} The condition $\rho > \rho_0$ is structurally optimal for the closure of this multi-scale framework. If $\rho \le \rho_0$, global existence fails due to two distinct mathematical obstructions. First, if $\rho \le \rho_F$, the long-time non-local interaction integrated over the low-frequency tail, namely
	\begin{equation*}
		\int_1^\infty s^{-\rho\beta_{m,p}} \, ds,
	\end{equation*}
	diverges algebraically or logarithmically. This divergence precludes the asymptotic integrability required to close the Volterra contraction mapping. In alignment with the classical Fujita phenomenon, the collapse of this upper bound indicates that the non-local anomalous dissipation is entirely counterbalanced by the instantaneous nonlinear reaction; thus, non-trivial mild solutions satisfying suitable initial mass or sign configurations are expected to undergo finite-time singularity formation. Since tracking the exact weak space-time blow-up thresholds for this generalized dual-scale class of admissible kernels requires a disparate framework based on specialized nonlinear test functions and variational energy methods---extending the explicit threshold methodologies recently developed by the author in \cite{deA26} for the standard fractional diffusion-wave equation---this non-existence inquiry remains an open problem to be addressed in a forthcoming independent work. Second, if $\rho \le \frac{1+\alpha_\infty}{1+\alpha_0}$, the admissible Lebesgue intersection interval $\left(\frac{q_c}{\rho}, m_c\right)$ collapses and becomes empty. This represents a fundamental algebraic barrier where the nonlinear amplification power lacks the capacity to bridge the short-time super-diffusive regularization near the origin with the long-time relaxation at infinity.
\end{remark}

	Concluding the analysis of the global dynamics, the asymptotic profile of the nonlinear solution is investigated. In classical nonlinear parabolic and dispersive theories, critical nonlinearities often alter the fundamental decay rate of the system, leading to anomalous steepening or distinct self-similar profiles. However, the constraint $\rho > \rho_F$ limits the temporal mass of the nonlinear interactions. Considering that the nonlinear source is integrable in the low-frequency regime, it acts as a transient perturbation. Consequently, the long-time behavior of the nonlinear flow is governed by the linear fractional profile dictated by $\alpha_0$. This stability is formalized in the following corollary.
	
	\begin{co}\label{cor:asymptotic_decay}
		Let $u \in X_\infty$ be the global mild solution obtained in Theorem \ref{thm:global_existence}. Then, the nonlinear source satisfies $f(u) \in L^1(0, \infty; L^m(\mathbb{R}^N))$. Furthermore, $u$ exhibits the asymptotic linear decay
		\begin{equation*}
			\limsup_{t \to \infty} t^{\beta_{m,p}} \|u(t)\|_{L^p(\mathbb{R}^N)} \le C_L \|u_0\|_{L^m} + C_\infty \|u\|_{X_\infty}^\rho,
		\end{equation*}
		where $C_\infty > 0$ is a constant depending exclusively on the total nonlinear mass.
	\end{co}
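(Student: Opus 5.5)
The first claim is a direct consequence of the weighted bound defining $X_\infty$. With $p=\rho m$ we have $\|f(u(s))\|_{L^m}=\|u(s)\|_{L^p}^\rho\le \Theta(s)^{-\rho}\|u\|_{X_\infty}^\rho$, where $\Theta(s)^{-\rho}=\min(s^{-\rho\gamma_{q_c,p}},s^{-\rho\beta_{m,p}})$. I will therefore split $\int_0^\infty\|f(u(s))\|_{L^m}\,ds$ at $s=1$.

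On $(0,1)$ I bound the weight by $s^{-\rho\gamma_{q_c,p}}$. This is integrable because $p=\rho m<\rho m_c<\rho q_c$, which gives $\rho\gamma_{q_c,p}<1$ by the computation in Remark \ref{rem:spectral_bridge}. On $(1,\infty)$ I bound the weight by $s^{-\rho\beta_{m,p}}$. This is integrable since $\rho\beta_{m,p}=m_c/m>1$. Together these give
\[
\|f(u)\|_{L^1(0,\infty;L^m)}\le C_\Theta\|u\|_{X_\infty}^\rho,
\]
and I will call the quantity $M_\infty:=\int_0^\infty\|f(u(s))\|_{L^m}ds$ the total nonlinear mass.

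For the decay estimate I write $u(t)=S(t)u_0+I_1(t)+I_2(t)$ with the midpoint splitting used in Step 2 of Theorem \ref{thm:global_existence}, for $t>2$. The linear part is handled by Lemma \ref{lem:smoothing_asymptotic} with $t\ge1$, giving $t^{\beta_{m,p}}\|S(t)u_0\|_{L^p}\le C_L\|u_0\|_{L^m}$.

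For $I_1$, since $t-s\ge t/2\ge1$, the same lemma applies, and $(t-s)^{-\beta_{m,p}}\le 2^{\beta_{m,p}}t^{-\beta_{m,p}}$. This yields
\[
t^{\beta_{m,p}}\|I_1(t)\|_{L^p}\le 2^{\beta_{m,p}}C\int_0^{t/2}\|f(u(s))\|_{L^m}ds\le 2^{\beta_{m,p}}C\,M_\infty .
\]
Here the $L^1$ property just proved is used exactly as it should be. Since $M_\infty\le C_\Theta\|u\|_{X_\infty}^\rho$, this contributes $C_\infty\|u\|_{X_\infty}^\rho$, with $C_\infty$ controlled by the nonlinear mass.

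For $I_2$ I reuse the bound already obtained in the proof of Theorem \ref{thm:global_existence}:
\[
\|I_2(t)\|_{L^p}\le C\bigl(t^{-\rho\beta_{m,p}}+t^{1-(\rho+1)\beta_{m,p}}\bigr)\|u\|_{X_\infty}^\rho .
\]
Multiplying by $t^{\beta_{m,p}}$ gives exponents $-(\rho-1)\beta_{m,p}<0$ and $1-\rho\beta_{m,p}<0$, so $t^{\beta_{m,p}}\|I_2(t)\|_{L^p}\to0$. Taking $\limsup_{t\to\infty}$ of the three contributions gives the stated inequality.

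The main obstacle is the $I_2$ term, because its estimate relies on the unified bound $\|S(\tau)\|_{L^m\to L^p}\lesssim\min(\tau^{-\gamma_{m,p}},\tau^{-\beta_{m,p}})$ for small $\tau$. Lemma \ref{lem:smoothing} is stated only on $(0,T_0]$, and the range $[T_0,1]$ has to be bridged. I would first cover this range by the uniform bound on compact time intervals, which follows from the lemmas by a semigroup-free continuity argument on the multiplier. I would also check the dimensional restriction $1/m-1/p=(\rho-1)/p<2/N$, which holds since $p>q_c$. One could further refine $I_1$ via dominated convergence, replacing $t^{\beta_{m,p}}(t-s)^{-\beta_{m,p}}\to1$ pointwise, to sharpen $C_\infty$ to essentially $C\,M_\infty$. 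This refinement is optional.
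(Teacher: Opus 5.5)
Your proposal is correct and follows the paper's proof essentially step for step. Both split at $s=1$ to get $f(u)\in L^1(0,\infty;L^m(\mathbb{R}^N))$, use the midpoint Duhamel decomposition $u=S(t)u_0+I_1+I_2$ with Lemma \ref{lem:smoothing_asymptotic} for the linear part and for $I_1$, and recycle the $I_2$ bound from Theorem \ref{thm:global_existence}. The only deviation is that you bound $(t/(t-s))^{\beta_{m,p}}$ by $2^{\beta_{m,p}}$ instead of letting $t\to\infty$ under dominated convergence as the paper does, which only changes the value of $C_\infty$; also, the range $[T_0,1]$ you flag is already covered by Lemma \ref{lem:smoothing} as stated, since $T_0>0$ is arbitrary there.
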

	
	\begin{proof}
		Since $u \in X_\infty$ and $p = \rho m$, it follows that $\|f(u(s))\|_{L^m} = \|u(s)\|_{L^p}^\rho$. According to the definition of the resolution space, $\|u(s)\|_{L^p} \le \Theta(s)^{-1} \|u\|_{X_\infty}$. As $\Theta(s)^{-1} = \min(s^{-\gamma_{q_c, p}}, s^{-\beta_{m, p}})$, the nonlinear source satisfies the global bound
		\begin{equation*}
			\|f(u(s))\|_{L^m} \le \min \left( s^{-\rho \gamma_{q_c, p}}, s^{-\rho \beta_{m, p}} \right) \|u\|_{X_\infty}^\rho, \quad \forall s > 0.
		\end{equation*}
		Since $\rho \gamma_{q_c, p} < 1$, the singularity near the origin is locally integrable; concurrently, the condition $\rho > \rho_F$ ensures $\rho \beta_{m, p} > 1$, guaranteeing integrability at infinity. Integrating over $(0, \infty)$ yields
		\begin{align*}
			\int_0^\infty \|f(u(s))\|_{L^m} ds &\le \left( \int_0^1 s^{-\rho \gamma_{q_c, p}} ds + \int_1^\infty s^{-\rho \beta_{m, p}} ds \right) \|u\|_{X_\infty}^\rho \\
			&:= \widetilde{C}_\infty \|u\|_{X_\infty}^\rho < \infty,
		\end{align*}
		which establishes that $f(u) \in L^1(0, \infty; L^m(\mathbb{R}^N))$.
		
		Evaluating the asymptotic limit requires revisiting the Duhamel splitting: 
		\begin{equation*}
			u(t) = S(t)u_0 + I_1(t) + I_2(t).
		\end{equation*}
		Multiplying by the temporal weight $t^{\beta_{m,p}}$ and applying the triangle inequality yields
		\begin{equation*}
			t^{\beta_{m,p}} \|u(t)\|_{L^p} \le t^{\beta_{m,p}} \|S(t)u_0\|_{L^p} + t^{\beta_{m,p}} \|I_1(t)\|_{L^p} + t^{\beta_{m,p}} \|I_2(t)\|_{L^p}.
		\end{equation*}
		Lemma \ref{lem:smoothing_asymptotic} guarantees $t^{\beta_{m,p}} \|S(t)u_0\|_{L^p} \le C_L \|u_0\|_{L^m}$. Furthermore, Theorem \ref{thm:global_existence} establishes that $\|I_2(t)\|_{L^p}$ decays faster than $t^{-\beta_{m,p}}$, implying $\lim_{t \to \infty} t^{\beta_{m,p}} \|I_2(t)\|_{L^p} = 0$.
		
		For the transient memory component $I_1(t)$, applying the $L^m \to L^p$ linear bound yields
		\begin{equation*}
			t^{\beta_{m,p}} \|I_1(t)\|_{L^p} \le C \int_0^{t/2} \left( \frac{t}{t-s} \right)^{\beta_{m,p}} \|f(u(s))\|_{L^m} ds.
		\end{equation*}
		As the integration domain is restricted to $s \in (0, t/2)$, the temporal ratio satisfies $\frac{t}{t-s} \le 2$. The corresponding integrand is therefore dominated by the $L^1(0, \infty)$ function $2^{\beta_{m,p}} \|f(u(s))\|_{L^m}$. Since $\lim_{t \to \infty} \left( \frac{t}{t-s} \right)^{\beta_{m,p}} = 1$ pointwise for almost all $s > 0$, invoking the Lebesgue Dominated Convergence Theorem provides
		\begin{equation*}
			\limsup_{t \to \infty} t^{\beta_{m,p}} \|I_1(t)\|_{L^p} \le C \int_0^\infty \|f(u(s))\|_{L^m} ds \le C \widetilde{C}_\infty \|u\|_{X_\infty}^\rho.
		\end{equation*}
		Defining $C_\infty = C \widetilde{C}_\infty$ completes the proof.
	\end{proof}

	\appendix
	
\section{Spatial localization and Peetre's inequality}
Securing the uniform lower bound $\|F_\lambda\|_{L^q(\mathbb{R}^N)} \ge c_1 > 0$ required for the proof of instantaneous norm inflation in Theorem \ref{thm:ill_posedness} requires establishing a pointwise polynomial decay for the pre-limit profile $F_\lambda(y)$ that is independent of the scaling parameter $\lambda \ge 1$. Consider the localized convolution operator defining the uniform envelope function $\mathcal{E}(y)$, denoted for each fixed $\theta \in (0, \tau]$ by
\begin{align}\label{eq:app_conv_def}
	J_\theta(y) &:= \left[ \theta^{-N/\sigma} \mathcal{G}_t(\cdot \theta^{-1/\sigma}) \ast f(K(\theta, \lambda)\phi) \right](y)\\ \nonumber
	& = \int_{\mathbb{R}^N} \theta^{-N/\sigma} \mathcal{G}_t((y-z)\theta^{-1/\sigma}) f(K(\theta, \lambda)\phi(z)) \, dz.
\end{align}
A direct pointwise analysis of \eqref{eq:app_conv_def} reveals a temporal prefactor $\theta^{-N/\sigma}$ that diverges as $\theta \to 0^+$ for any dimension $N \ge 1$ (since $\sigma < 2$). To resolve this singularity, a coordinate transformation is applied to the spatial integration. Setting $z = y - w \theta^{1/\sigma}$, the differential volume element transforms as $dz = \theta^{N/\sigma} dw$. Substituting these relations into \eqref{eq:app_conv_def} yields the regularized integral
\begin{equation}\label{eq:regularized_j_theta}
	J_\theta(y) = \int_{\mathbb{R}^N} \mathcal{G}_t(w) f(K(\theta, \lambda)\phi(y - w \theta^{1/\sigma})) \, dw,
\end{equation}
where the temporal prefactor is absorbed by the Jacobian of the dilation.

The localization weight $(1+|y|)^{N+2}$ is introduced to control the spatial decay away from the origin. Invoking Peetre's inequality (cf. Bergh and L\"ofstr\"om \cite{Bergh1976}) for the triplet $\left(y, \, y - w \theta^{1/\sigma}, \, w \theta^{1/\sigma}\right)$ with exponent $s = N+2$, the weight is bounded by
\begin{equation}\label{eq:peetre_applied_app}
	(1+|y|)^{N+2} \le 2^{N+2} \left(1 + |y - w \theta^{1/\sigma}|\right)^{N+2} \left(1 + |w \theta^{1/\sigma}|\right)^{N+2}.
\end{equation}
Considering that the evaluation is restricted to $\theta \in (0, \tau^\ast]$, the geometric scaling parameter satisfies $\theta^{1/\sigma} \le (\tau^\ast)^{1/\sigma}$. Introducing the structural constant $C_{\tau^\ast} = \max\left(1, (\tau^\ast)^{\frac{N+2}{\sigma}}\right)$ yields the uniform algebraic bound $\left(1 + |w \theta^{1/\sigma}|\right)^{N+2} \le C_{\tau^\ast} (1+|w|)^{N+2}$. Substituting this estimate into \eqref{eq:peetre_applied_app} and weighting the regularized convolution \eqref{eq:regularized_j_theta} produces
\begin{equation}\label{eq:weighted_j_integral}
	\begin{split}
		(1+|y|)^{N+2} |J_\theta(y)| &\le 2^{N+2} C_{\tau^\ast} \int_{\mathbb{R}^N} (1+|w|)^{N+2} |\mathcal{G}_t(w)| \\
		&\quad \times \left(1 + |y - w \theta^{1/\sigma}|\right)^{N+2} \left| f(K(\theta, \lambda)\phi(y - w \theta^{1/\sigma})) \right| dw.
	\end{split}
\end{equation}

The uniform bounds can now be established. By the explicit characterization of the localized singularity detailed in Remark \ref{rem:kernel_singularity}, the spatial kernel $\mathcal{G}_t(w)$ associated with the H\"ormander class $S^{-2}_{1,0}$ is locally integrable near the origin for any dimension $N \ge 1$, because the radial measure $r^{N-1}$ neutralizes both the algebraic pole $r^{-(N-2)}$ and the logarithmic singularity. Combining this local integrability with rapid polynomial decay at infinity ensures the global integrability of the weighted kernel; specifically,
\begin{equation}\label{eq:kernel_l1_weighted_final}
	\int_{\mathbb{R}^N} (1+|w|)^{N+2} |\mathcal{G}_t(w)| \, dw := \widetilde{C}_{\mathcal{G}} < \infty,
\end{equation}
where the integral constant $\widetilde{C}_{\mathcal{G}} > 0$ is independent of the transient time configurations. Furthermore, considering that the target profile belongs to the Schwartz class $\phi \in \mathcal{S}(\mathbb{R}^N)$ and the linear operator $K(\theta, \lambda)$ acts boundedly on $\mathcal{S}(\mathbb{R}^N)$, the power-type nonlinearity preserves this rapid polynomial decay. This yields the uniform envelope bound
\begin{equation}\label{eq:schwartz_nonlin_bound_final}
	\sup_{x \in \mathbb{R}^N} (1+|x|)^{N+2} |f(K(\theta, \lambda)\phi(x))| \le C_{\mathcal{S}} < \infty.
\end{equation}
Applying the supremum bound $C_{\mathcal{S}}$ to the translated nonlinear source in \eqref{eq:weighted_j_integral}, and resolving the spatial integration over the weighted kernel via \eqref{eq:kernel_l1_weighted_final}, isolates the scaling parameters, yielding
\begin{equation*}
	(1+|y|)^{N+2} |J_\theta(y)| \le 2^{N+2} C_{\tau^\ast} C_{\mathcal{S}} \int_{\mathbb{R}^N} (1+|w|)^{N+2} |\mathcal{G}_t(w)| \, dw = 2^{N+2} C_{\tau^\ast} C_{\mathcal{S}} \widetilde{C}_{\mathcal{G}}.
\end{equation*}
Taking the supremum over $\theta \in (0, \tau^\ast]$ on both sides establishes the uniform algebraic localization
\begin{equation*}
	|F_\lambda(y)| \le \int_0^{\tau^\ast} |J_\theta(y)| \, d\theta \le C_0 (1+|y|)^{-(N+2)},
\end{equation*}
where the constant $C_0 := 2^{N+2} \tau^\ast C_{\tau^\ast} C_{\mathcal{S}} \widetilde{C}_{\mathcal{G}} > 0$ is independent of the asymptotic modulation parameter $\lambda \ge 1$. This completes the proof of spatial localization, confirming that the spatial mass of the first Picard iterate remains confined to a compact region, which ultimately validates the sharpness of the supercritical ill-posedness threshold.

	\
	
	\noindent{\bf \large Acknowledgement and declarations}
	
	\
	
	\noindent Bruno de Andrade is partially supported by CNPQ (grant 310384/2022-2) and FAPITEC/SE (grant 019203.01303/2024-1).\\
	
	\noindent{\bf Declaration of Generative AI and AI-assisted technologies in the writing process:}	During the preparation of this work, the author used Gemini to improve the English language. After using this tool, the author reviewed and edited the content as needed and take full responsibility for the final version of the manuscript.

\end{document}